\documentclass[12pt]{article}
\usepackage{amsmath, amsthm, amssymb,latexsym,enumerate}
\usepackage{graphicx}
\usepackage{hyperref}
\usepackage{xcolor}
\usepackage{todonotes}
\usepackage{cite}
\usepackage{dsfont}

\oddsidemargin -0.1pt
\evensidemargin -0.1pt
\marginparwidth 30pt
\marginparsep 10pt
\topmargin -20pt
\headsep 10pt
\textheight 8.7in
\textwidth 6.85in
\linespread{1.2}

\newtheorem{theorem}{Theorem}[section]

\newtheorem{claim}[theorem]{Claim}
\newtheorem{conjecture}[theorem]{Conjecture}
\newtheorem{question}[theorem]{Question}

\numberwithin{equation}{section}

\title{On independent domination of regular graphs} 

\author{Eun-Kyung Cho\thanks{
Department of Mathematics, Hankuk University of Foreign Studies, Yongin-si, Gyeonggi-do, Republic of Korea.
 \texttt{ekcho2020@gmail.com}
}
\and Ilkyoo Choi\thanks{
Department of Mathematics, Hankuk University of Foreign Studies, Yongin-si, Gyeonggi-do, Republic of Korea.
\texttt{ilkyoo@hufs.ac.kr}
}
\and Boram Park\thanks{
Department of Mathematics, Ajou University, Suwon-si, Gyeonggi-do, Republic of Korea.
\texttt{borampark@ajou.ac.kr}
}}
\date\today

\begin{document}
 
\maketitle

\begin{abstract} 
Given a graph $G$, a {\it dominating set} of $G$ is a set $S$ of vertices such that each vertex not in $S$ has a neighbor in $S$.  
The {\it domination number} of $G$, denoted $\gamma(G)$, is the minimum size of a dominating set of $G$. 
The {\it independent domination number} of $G$, denoted $i(G)$, is the minimum size of a dominating set of $G$ that is also independent. 
Note that every graph has an independent dominating set, as a maximal independent set is equivalent to an independent dominating set. 

Let $G$ be a connected $k$-regular graph that is not $K_{k, k}$ where $k\geq 4$. 
Generalizing a result by Lam, Shiu, and Sun, we prove that  $i(G)\le \frac{k-1}{2k-1}|V(G)|$, which is tight for $k = 4$.
This answers a question by Goddard et al.
in the affirmative. 
We also show that $\frac{i(G)}{\gamma(G)} \le \frac{k^3-3k^2+2}{2k^2-6k+2}$, strengthening upon a result of Knor, \v Skrekovski, and Tepeh.
In addition, we prove that a graph $G'$ with maximum degree at most $4$ satisfies $i(G') \le \frac{5}{9}|V(G')|$, which is also tight. 
\end{abstract}

\section{Introduction}

Let $G$ be a finite simple graph.
Let $V(G)$ and $E(G)$ denote the vertex set and the edge set, respectively, of $G$. 
A {\it dominating set} of $G$ is a subset $S$ of $V(G)$ such that each vertex not in $S$ has a neighbor in $S$. 
The {\it domination number} of $G$, denoted $\gamma(G)$, is the minimum size of a dominating set of $G$.
Domination is an extensively studied classic topic in graph theory, to the point that 
there are several books focused solely on domination, see~\cite{book1,book2,book3,book4}.

A dominating set that is also an independent set is an {\it independent dominating set}.
The {\it independent domination number} of $G$, denoted by $i(G)$, is the minimum size of an independent dominating set of $G$. 
Note that every graph has an independent dominating set, as a maximal independent set is equivalent to an independent dominating set. 
This concept appears in the literature as early as 1962 by Berge~\cite{berge1962theory} and Ore~\cite{ore1962theory}.
For a survey regarding independent domination, see~\cite{goddard2013independent}.

We focus on finding the maximum (constant) ratio of the independent domination number and the number of vertices for regular graphs. 
Surprisingly, not much is known for $k$-regular graphs when $k\geq4$. 
We are also interested in the class of graphs with bounded maximum degree. 
We first lay out  related  literature for the independent domination number of regular graphs.

For a connected $k$-regular graph $G$ where $k\geq 1$, Rosenfeld~\cite{rosenfeld1964independent} showed that $i(G)\leq \frac{|V(G)|}{2}$, which is tight only for the balanced complete bipartite graph $K_{k, k}$. 
We are interested in lowering the upper bound on the independent domination number when the balanced complete bipartite graph is excluded. 
Note that there are no connected $1$-regular graph when $K_{1, 1}$ is excluded.
When $k=2$, so $G$ is a cycle, one can easily calculate that $i(G)\leq \frac{3}{7}|V(G)|$ holds except for the $4$-cycle, which is $K_{2, 2}$.
Extending this pattern, Lam, Shiu, and Sun \cite{lam1999independent} showed the below result for {\it cubic} graphs, which are $3$-regular graphs:

\begin{theorem}[\cite{lam1999independent}]\label{thm:cubic}
If $G$ is a cubic graph on at least $8$ vertices, then   $i(G)\leq \frac{2}{5}|V(G)|$, and the bound is tight by $C_5\square K_2$. See the left graph in Figure~\ref{fig:prism}.
\end{theorem}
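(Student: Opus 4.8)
The plan is to reduce the statement to a clean counting inequality and then attack that inequality by a minimal counterexample analysis. Throughout write $n=|V(G)|$ and assume $G$ is connected. Let $S$ be any independent dominating set of $G$, and for $j\in\{1,2,3\}$ let $T_j$ be the set of vertices outside $S$ with exactly $j$ neighbours in $S$. Since $S$ is independent and dominating, $V(G)=S\cup T_1\cup T_2\cup T_3$, and counting in two ways the edges between $S$ and $V(G)\setminus S$ gives
\[
3|S| \;=\; |T_1|+2|T_2|+3|T_3|.
\]
Together with $n=|S|+|T_1|+|T_2|+|T_3|$, this shows that $|S|\le\tfrac25 n$ is \emph{equivalent} to $|T_3|\le|T_1|$. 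So it is enough to produce a single independent dominating set $S$ in which at least as many outside vertices have a unique neighbour in $S$ as have all three of their neighbours in $S$.

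Some care is needed, since the naive approach fails: growing an independent set by repeatedly adjoining an undominated vertex that newly dominates as many vertices as possible can be forced, on suitable cubic bipartite graphs, to finish with a large independent ``leftover'' set each of whose vertices must be adjoined on its own, producing an independent dominating set of size $\tfrac12 n$. Instead I would argue by contradiction: let $G$ be a connected cubic graph on $n\ge 8$ vertices with $i(G)>\tfrac25 n$ and $n$ minimum, and fix a minimum independent dominating set $S$; by the reformulation $|T_3|>|T_1|$, so some $v\in T_3$ has $N(v)=\{s_1,s_2,s_3\}\subseteq S$. The core step is a local augmentation analysis rooted at $v$: inspect the at most six further neighbours of $s_1,s_2,s_3$, and record which $s_i$ owns a private neighbour (a $T_1$-vertex whose only neighbour in $S$ is $s_i$). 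In each configuration one either exhibits a bounded local swap -- delete some of $s_1,s_2,s_3$ together with a few nearby vertices of $S$ and re-insert an independent set dominating the same vertices but no larger, and strictly smaller in enough cases -- contradicting minimality of $|S|$ (or of $n$, once the swap is packaged as a reduction to a smaller cubic graph), or else the local picture is rigid and pins down the structure of $G$ around $v$. Propagating the rigid pieces forces $G$ to be the pentagonal prism $C_5\square K_2$ or a cubic graph on fewer than $8$ vertices, neither of which is a counterexample -- for the prism $i=4=\tfrac25\cdot 10$ -- a contradiction.

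The main obstacle is precisely this case analysis together with the extremal characterization. There are many local types to dispatch; one must check that every type admitting no improving swap glues up globally into $C_5\square K_2$ and into nothing else, and that no intended swap is secretly blocked by vertex identifications forced by the small size of $G$ (which is where the hypothesis $n\ge 8$ is used). A more streamlined bookkeeping of the same strategy would dispense with explicit swaps and instead, for a carefully chosen $S$, place a unit of supply at each vertex of $T_1$ and a unit of demand at each vertex of $T_3$, then run a discharging argument routing demand to supply along short paths in $G$; establishing the requisite Hall-type feasibility condition is again where the real work concentrates.
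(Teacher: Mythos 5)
Your reduction is correct, and it is in fact the same implicit inequality that drives this paper's proof of Theorem~\ref{mainthm:kreg}: for a connected cubic graph, the identities $3|S|=|T_1|+2|T_2|+3|T_3|$ and $n=|S|+|T_1|+|T_2|+|T_3|$ give $5|S|-2n=|T_3|-|T_1|$, so $|S|\le\frac{2}{5}n$ is equivalent to $|T_3|\le|T_1|$. This is exactly what the paper's discharging collapses to at $k=3$: after {\bf[R1]} a vertex of $J_i$ holds charge $i-2$, so the total final charge is $|J_3|-|J_1|$. (Note the paper does not reprove the cubic case; it cites Lam, Shiu, and Sun and proves the analogue for $k\ge4$, so the relevant benchmark is its Section~2 argument.)

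The gap is that you never prove $|T_3|\le|T_1|$. Everything after the reformulation is a description of a proof rather than a proof: ``in each configuration one either exhibits a bounded local swap \dots or else the local picture is rigid,'' ``propagating the rigid pieces forces $G$ to be the pentagonal prism,'' ``establishing the requisite Hall-type feasibility condition is again where the real work concentrates.'' Those configurations and swaps \emph{are} the theorem, and they are delicate in at least three ways. First, the local structure needed to certify $|T_3|\le|T_1|$ by swaps or discharging need not be visible for an arbitrary minimum independent dominating set; this is precisely why the paper equips $I$ with the tie-breaking conditions (1) and (2) before Claims~\ref{claim-basic-k}--\ref{claim-basic2-k} can be established, and your ``carefully chosen $S$'' is never specified. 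Second, your induction on $n$ is not set up: deleting vertices from a cubic graph does not leave a cubic graph, so ``packaging the swap as a reduction to a smaller cubic graph'' requires an explicit construction you do not give. Third, the exceptional structures $K_{3,3}$ (excluded by $n\ge 8$) and the extremal $C_5\square K_2$ must emerge from the case analysis, and you supply no mechanism forcing a swap-free configuration to glue up into the prism and nothing else. Until the swap/discharging step is actually carried out --- for instance by adapting Claims~\ref{claim-temp-k} and~\ref{claim-basic2-k} and rules {\bf[R2]}--{\bf[R3]} to $k=3$, where several quantities such as $k-3$ degenerate and the argument needs separate care --- the proposal establishes only the easy equivalence, not the theorem.
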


Since the only cubic graph on at most 6 vertices that does not satisfy the above theorem is $K_{3, 3}$, one can reinterpret the above theorem as the following: if $G$ is a cubic graph that is not $K_{3, 3}$, then $i(G)\leq \frac{2}{5}|V(G)|$, which is tight for $C_5\square K_2$. 

\begin{figure}[h!]
\centering
\includegraphics[width=3cm, page  = 1]{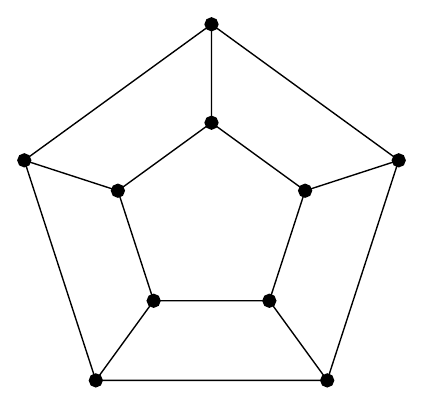}
\qquad \qquad
\includegraphics[width=3cm, page =2]{fig_ind_dom_all.pdf}
\qquad \quad
\includegraphics[width=4cm, page = 3]{fig_ind_dom_all.pdf}
\caption{The graph $C_5\square K_2$, the $4$-regular expansion of a $7$-cycle, and the graph $H(q,p)$}\label{fig:prism}
\end{figure}

As $K_{3, 3}$ is the only graph where equality holds in Rosenfeld's upper bound on the independent domination number for cubic graphs, Goddard and Henning~\cite{goddard2013independent} conjectured that there is only one graph where the upper bound in Theorem~\ref{thm:cubic} is tight. 
Namely, they conjectured that if $G$ is a connected cubic graph that is neither $K_{3, 3}$ nor $C_5 \square K_2$, then $i(G) \le \frac{3}{8}|V(G)|$.
 This conjecture is still open, and as a partial result, Dorbec et al.~\cite{dorbec2015independent} 
showed that the conjecture holds if in addition $G$ does not have a subgraph isomorphic to $K_{2,3}$.
For other conjectures and partial results regarding the independent domination number of subclasses of cubic graphs,
see~\cite{goddard2013independent,goddard2012independent,abrishami2018independent,duckworth2006independent}.

Unlike cubic graphs, little was known for $k$-regular graphs where $k\ge 4$. 
Let $H'$ be the $4$-regular expansion of a $7$-cycle, see the middle graph in Figure~\ref{fig:prism}. 
Goddard et al.~\cite{goddard2012independent} observed that $H'$ satisfies $i(H') = \frac{3}{7}|V(H')|$, and asked the following question:

\begin{question}[\cite{goddard2012independent}]\label{question:4reg} 
If $G$ is a connected $4$-regular graph that is not $K_{4,4}$, then does $i(G)\leq \frac{3}{7}|V(G)|$ hold?
\end{question}

Our first result answers the above question in affirmative.
We actually prove a theorem that applies to all $k$-regular graphs where $k\geq 3$, so our result also encompasses Theorem~\ref{thm:cubic}. 

\begin{theorem}\label{mainthm:kreg}
For $k \ge 3$, if $G$ is a connected $k$-regular graph that is not $K_{k, k}$, then $i(G)\le \frac{k-1}{2k-1}|V(G)|$.
\end{theorem}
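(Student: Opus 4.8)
The plan is to produce an explicitly small independent dominating set, using a clean edge-counting identity to translate the bound into a statement about the graph induced on the non-dominators. For \emph{any} independent dominating set $I$ of the $k$-regular graph $G$, write $D=V(G)\setminus I$; since $I$ is independent, every vertex of $I$ sends all $k$ of its edges into $D$, so $k|I| = e(I,D) = k|D| - 2e(G[D])$, whence $|I| = |D| - \tfrac{2}{k}e(G[D])$. A one-line rearrangement shows that
\[
 |I|\le \tfrac{k-1}{2k-1}|V(G)| \iff e(G[D])\ge \tfrac12|D| \iff \sum_{v\in D}\bigl(\deg_{G[D]}(v)-1\bigr)\ge 0 .
\]
So it suffices to find an independent dominating set $I$ for which $G[V(G)\setminus I]$ has average degree at least one; equivalently, the deficiency coming from vertices of $D$ that are isolated in $G[D]$ must be paid for by vertices of $D$ with at least two neighbours in $D$.

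To obtain such an $I$, I would take an independent dominating set of minimum size, subject to that maximizing $e(G[V(G)\setminus I])$ (equivalently, minimizing the number of vertices of $D$ isolated in $G[D]$), and argue its deficiency is nonnegative. Suppose not; then, as the displayed sum is negative, there is a vertex $v\in D$ isolated in $G[D]$, so $N(v)=\{u_1,\dots,u_k\}\subseteq I$. The basic move is the swap $\tilde I=\bigl(I\setminus\{u_1,\dots,u_k\}\bigr)\cup\{v\}\cup J$, where $J$ is a minimum independent dominating set of $G[W_v]$ and $W_v$ is the set of vertices left undominated, namely those in $D\setminus(\{v\}\cup N(v))$ whose only $I$-neighbours lie in $N(v)$. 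One checks that $\tilde I$ is again an independent dominating set, that $|W_v|\le k(k-1)$, and — the computational heart — that
\[
 e\bigl(G[V(G)\setminus\tilde I]\bigr)=e\bigl(G[D]\bigr)+k\bigl(k-1-|J|\bigr),\qquad
 |V(G)\setminus\tilde I| = |D| + \bigl(k-1-|J|\bigr),
\]
so the potential $f(I):=2e(G[D])-|D|$ changes by exactly $(2k-1)(k-1-|J|)$, while $|\tilde I| = |I|-(k-1)+|J|$.

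Minimality of $I$ forces $|J|\ge k-1$, so the swap never decreases $|I|$, and if $|J|\ge k$ the move is simply not used; the crux is the boundary case $|J|=k-1$, in which $f$ is unchanged and $\tilde I$ is again a minimum independent dominating set. Here I would exploit structural rigidity: $G[W_v]$ has a minimum independent dominating set of size exactly $k-1$ on at most $k(k-1)$ vertices each of degree at most $k-1$, while $v$ sees only $I$. Iterating the swap (or comparing against the tie-break on the number of $G[D]$-isolated vertices) should either strictly improve the chosen $I$, a contradiction, or force the local structure around $v$ to be a complete bipartite piece; since $G$ is connected and $k$-regular, such a piece forces $G=K_{k,k}$, which is excluded.

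I expect this last rigidity/bookkeeping step — tracking how the sets $W_v$ for different isolated vertices $v$ overlap, and eliminating the tight configuration without the $K_{k,k}$ escape — to be the main obstacle, and the place where $k\ge 3$, connectedness, and $G\neq K_{k,k}$ are all genuinely needed; the rest is the identity above together with the arithmetic of the swap. Once the deficiency $\sum_{v\in D}(\deg_{G[D]}(v)-1)$ is shown to be nonnegative, the first display yields $i(G)\le|I|\le\frac{k-1}{2k-1}|V(G)|$, as desired.
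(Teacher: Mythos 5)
There is a genuine gap. Your reduction is correct and in fact identical to the paper's target: writing $D=V(G)\setminus I$, the identity $k|I|=k|D|-2e(G[D])$ shows the theorem is equivalent to exhibiting a (minimum) independent dominating set with $\sum_{v\in D}(\deg_{G[D]}(v)-1)\ge 0$, which is exactly the inequality the paper's discharging argument establishes (after its rule [R1], a vertex with $i$ neighbours in $I$ carries charge $-(k-i-1)$, so the total charge is $-\sum_{v\in D}(\deg_{G[D]}(v)-1)$). The problem is with how you propose to prove that inequality. First, your secondary optimization is vacuous: the same identity gives $e(G[D])=\tfrac{k}{2}\bigl(|V(G)|-2|I|\bigr)$, so \emph{every} minimum independent dominating set has the same value of $e(G[D])$ and of your potential $f(I)=2e(G[D])-|D|$; ``maximizing $e(G[D])$'' selects nothing, and it is \emph{not} equivalent to minimizing the number of $G[D]$-isolated vertices. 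Second, by your own computation the swap changes $f$ by $(2k-1)(k-1-|J|)$ while minimality forces $|J|\ge k-1$, so the swap can only preserve or decrease $f$; iterating it can never drive $f$ up to $0$, so the mechanism you describe cannot establish $f\ge 0$.

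Third, the claimed endgame is false: tight local configurations around an isolated vertex of $G[D]$ do \emph{not} force $G=K_{k,k}$. The extremal graphs $C_5\square K_2$ ($k=3$) and the $4$-regular expansion of a $7$-cycle ($k=4$) satisfy $f(I)=0$ with $G[D]$ containing isolated vertices, so no local argument can derive a contradiction from their presence; one must instead show globally that each such vertex is compensated by nearby vertices of $D$ with at least two $D$-neighbours (or by certain pendant vertices of the bipartite graph $G_I$). This compensation is the entire content of the paper's proof: it requires choosing $I$ with two nontrivial tie-breaking conditions (minimizing the number of $K_{k-1,k}$-subgraphs of $G_I$, then maximizing certain pendant vertices with the right neighbourhood structure), proving structural claims about the sets $X(v)$ and $Y(v)$ (Claims 2.1--2.3), and then running the discharging rules [R2]--[R3]. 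Your proposal correctly identifies where the difficulty lies but leaves precisely that part unproved, so as written it does not constitute a proof.
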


To our knowledge, this is the best upper bound on the independent domination number for general $k$. 
Note that the left and middle graphs in Figure~\ref{fig:prism} demonstrate that the bound in Theorem~\ref{mainthm:kreg} is tight for $k \in \{3,4\}$.
Whether the bound is tight or not for $k \ge 5$ is unknown as we were unable to construct such examples. 

\medskip

We turn our attention to the ratio of the independent domination number and the domination number for connected regular graphs.
Note that the ratio can be arbitrarily large, as it is for the complete bipartite graph, so we seek to obtain a bound that depends on the regularity. 
Note that the independent domination number and the domination number does not differ for $k$-regular graphs when $k\leq 2$.  

For cubic graphs,
Goddard et al.~\cite{goddard2012independent}
proved that if $G$ is a connected cubic graph, then $\frac{i(G)}{\gamma(G)}\leq \frac{3}{2}$, and the bound is tight if and only if $G=K_{3,3}$. 
Southey and Henning~\cite{southey2013domination} extended the result by showing that $\frac{i(G)}{\gamma(G)}\leq \frac{4}{3}$ when $G\neq K_{3,3}$, and the bound is tight if and only if $G=C_5\square K_2$. 
O and West \cite{suil2016cubic} constructed  an infinite family of connected cubic graphs $G$ such that $\frac{i(G)}{\gamma(G)}=\frac{5}{4}$, and asked if there are only finitely many exceptions to the statement that a connected cubic graph $G$ satisfies $\frac{i(G)}{\gamma(G)} \le \frac{5}{4}$.

There was recent activity in investigating the ratio under consideration for $k$-regular graphs where $k\geq 4$. 
Babikir and Henning~\cite{babikir2020domination} showed that the statement of Goddard et al. in the previous paragraph also holds for $k$-regular graphs where $k\in\{4,5,6\}$.
Very recently, Knor, \v Skrekovski, and Tepeh~\cite{knor2020domination} generalized the result to all $k\geq 3$; namely, it is now known that 
for all $k \ge 2$, if $G$ is a connected $k$-regular graph, then $\frac{i(G)}{\gamma(G)}\leq \frac{k}{2}$, and the bound is tight if and only if $G=K_{k,k}$.

It is natural to ask if there exists a better bound than $\frac{k}{2}$ when $K_{k,k}$ is excluded, as it is the case for cubic graphs. 
Using Theorem~\ref{mainthm:kreg}, we are able to provide a better upper bound than $\frac{k}{2}$. 

\begin{theorem}\label{thm:ratio:kreg}
For $k \ge 4$, if $G$ is a connected $k$-regular graph that is not $K_{k,k}$, then $\frac{i(G)}{\gamma(G)} \le \frac{k^3-3k^2+2}{2k^2-6k+2}$. 
\end{theorem}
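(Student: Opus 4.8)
The plan is to combine Theorem~\ref{mainthm:kreg} with an elementary analysis of a minimum dominating set, playing two bounds on $i(G)$ off against each other through a single parameter. Write $n=|V(G)|$ and $\gamma=\gamma(G)$, fix a minimum dominating set $D$ (so $|D|=\gamma$), put $B=V(G)\setminus D$, and let $I$ be a maximal independent subset of the induced subgraph $G[D]$, with $a=|I|$. By maximality every vertex of $D\setminus I$ has a neighbor in $I$; hence $D\subseteq N[I]$, so $C:=V(G)\setminus N[I]$ is contained in $B$, and every $v\in C$ (dominated by $D$ but with no neighbor in $I$) has a neighbor in $D\setminus I$.

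First I would use the routine fact that $i(G)\le |I|+i(G-N[I])=a+i(G[C])$, since a minimum independent dominating set of $G[C]$ together with $I$ is independent (no edges cross $N[I]$) and dominating. A vertex of $D\setminus I$ has degree $k$ and at least one neighbor in $I$, hence at most $k-1$ neighbors in $C$; counting the edges between $D\setminus I$ and $C$ from both sides gives $|C|\le (k-1)(\gamma-a)$, and with the trivial $i(G[C])\le|C|$ this yields the first bound $i(G)\le (k-1)\gamma-(k-2)a$, which is strong when $a$ is large, i.e.\ when $G[D]$ is sparse.

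For the complementary regime I would observe that $G[D]$ cannot be too sparse when $a$ is small: the $\gamma-a$ vertices of $D\setminus I$ each contribute an edge of $G[D]$, so $G[D]$ has at least $\gamma-a$ edges. Counting the $k\gamma$ ends of edges incident to $D$ and using that each vertex of $B$ has a neighbor in $D$, one gets $|B|\le k\gamma-2(\gamma-a)$, hence $n\le (k-1)\gamma+2a$; since $G$ is a connected $k$-regular graph different from $K_{k,k}$ with $k\ge 4\ge 3$, Theorem~\ref{mainthm:kreg} applies and yields the second bound $i(G)\le\frac{k-1}{2k-1}\bigl((k-1)\gamma+2a\bigr)$. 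The first bound is non-increasing in $a$ and the second is increasing in $a$, so for every admissible $a$ the smaller of the two is at most their common value at $a=\frac{(k-1)\gamma}{2k-3}$, namely $\frac{(k-1)^2}{2k-3}\gamma$. Thus $i(G)\le\frac{(k-1)^2}{2k-3}\gamma$, and a one-line check (the relevant difference of cross products is $(k-1)(k-2)(k-4)\ge0$) shows $\frac{(k-1)^2}{2k-3}\le\frac{k^3-3k^2+2}{2k^2-6k+2}$ for all $k\ge 4$, which is the claimed inequality; it is sharp at $k=4$, and at $k=3$ this argument would reproduce the Southey--Henning ratio $\frac43$, attained by $C_5\square K_2$.

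Since each ingredient is short, I do not expect a serious obstacle, only the need to get the bookkeeping around $C=V(G)\setminus N[I]$ exactly right: verifying $C\subseteq B$, that every vertex of $C$ has a neighbor in $D\setminus I$, and that the $D\setminus I$ side caps the number of such edges at $(k-1)(\gamma-a)$. That is precisely the point that makes the first bound complement Theorem~\ref{mainthm:kreg}, which is itself sharpest exactly when $G[D]$ is dense.
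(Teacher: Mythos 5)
Your proof is correct, and it takes a genuinely different route from the paper's. The paper first proves, by induction (repeatedly swapping a maximum-degree vertex of $G[D]$ for a maximal independent set of its private neighbors), that $i(G)\le |D|+(k-3)\, n_1(G[D])$, where $n_1(G[D])$ counts the non-isolated vertices of $G[D]$ (Claim~\ref{clm:igd}), and then splits into two cases according to whether $n_0(G[D])/n_1(G[D])$ exceeds the threshold $c_k=\frac{k^2-4k+2}{k^2-2k}$, invoking Theorem~\ref{mainthm:kreg} only in the second case. You instead parametrize by the size $a$ of a maximal independent set $I$ of $G[D]$ and derive two complementary bounds: $i(G)\le(k-1)\gamma-(k-2)a$, from $I$ together with an independent dominating set of $G-N[I]$ whose vertex set $C$ satisfies $|C|\le(k-1)(\gamma-a)$, and $i(G)\le\frac{k-1}{2k-1}\bigl((k-1)\gamma+2a\bigr)$, from $n\le(k-1)\gamma+2a$ and Theorem~\ref{mainthm:kreg}; balancing at $a=\frac{(k-1)\gamma}{2k-3}$ gives $i(G)\le\frac{(k-1)^2}{2k-3}\gamma$. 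All the bookkeeping checks out: $D\subseteq N[I]$ by maximality of $I$, so $C\subseteq V(G)\setminus D$; each vertex of $C$ has a neighbor in $D\setminus I$ while each vertex of $D\setminus I$ has at most $k-1$ neighbors in $C$; and $e(G[D])\ge\gamma-a$ gives the upper bound on $n$. Notably, your constant $\frac{(k-1)^2}{2k-3}$ is at most the paper's $\frac{k^3-3k^2+2}{2k^2-6k+2}=\frac{(k-1)(k^2-2k-2)}{2(k^2-3k+1)}$ for all $k\ge4$ (the difference of cross products is $(k-1)(k-2)(k-4)\ge0$, as you note), with equality only at $k=4$; so your argument yields a strictly stronger bound for $k\ge5$ (e.g.\ $\frac{16}{7}<\frac{26}{11}$ at $k=5$), coincides with the paper's $\frac{9}{5}$ at $k=4$, and recovers the Southey--Henning constant $\frac{4}{3}$ at $k=3$. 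What each approach buys: the paper's Claim~\ref{clm:igd} is a reusable structural statement about arbitrary dominating sets, whereas your single-parameter balancing is shorter, avoids the induction, and is quantitatively tighter.
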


Note that $\frac{k^3-3k^2+2}{2k^2-6k+2} < \frac{k}{2}$ for all $k \ge 4$.
In particular, the bound becomes $\frac{9}{5}$ when $k=4$. 
To our knowledge, this is the first partial answer to the following question asked in~\cite{goddard2012independent}: does $\frac{i(G)}{\gamma(G)}\leq \frac{3}{2}$ hold for a connected $4$-regular graph $G$ that is not $K_{4, 4}$?
If the aforementioned question is true, then it is tight by the $4$-regular expansion of a $7$-cycle (and also an $8$-cycle).

\bigskip

We now switch gears and consider the family of graphs with bounded maximum degree. 
Since an {\it isolated vertex}, which is a vertex of degree $0$, must be part of every independent dominating set, we consider the class of graphs without isolated vertices; we call these graphs {\it isolate-free}.
Let $H(q, p)$ be the graph obtained by attaching $p$ pendent vertices to every vertex of a complete graph on $q$ vertices. 
See the right graph in Figure~\ref{fig:prism}.

Akbari et al.~\cite{akbari2021independent} proved that if $G$ is an isolate-free graph with maximum degree at most $3$, then $i(G)\le\frac{|V(G)|}{2}$, and they also characterized all graphs where equality holds. 
In this vein, we extend their result by proving a sharp upper bound on the independent domination number for isolate-free graphs with  maximum degree at most $4$. 

\begin{theorem}\label{thm:idset4}
If $G$ is an isolate-free graph with maximum degree at most $4$, then $i(G) \le \frac{5}{9}|V(G)|$, and equality holds for $H(3, 2)$. 
\end{theorem}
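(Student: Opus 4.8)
The plan is to prove the bound by strong induction on $n:=|V(G)|$, using the following reduction as the engine. For a vertex $v$, let $I_v$ be the set of vertices of $G$ that become isolated in $G-N[v]$; equivalently, $w\in I_v$ precisely when $w\notin N[v]$ and $N(w)\subseteq N(v)$. If some $u\notin N[v]$ had a neighbor $w\in I_v$, then $u\in N(w)\subseteq N(v)$, a contradiction, so no vertex outside $N[v]$ is adjacent to $I_v$; consequently $G':=G-N[v]-I_v$ is again isolate-free, has $|V(G')|=n-1-\deg(v)-|I_v|$, and $\{v\}\cup I_v$ together with a minimum independent dominating set $D'$ of $G'$ forms an independent dominating set of $G$ (the three parts are pairwise non-adjacent and each is independent; $v$ dominates $N[v]$, $I_v$ dominates itself, and $D'$ dominates $V(G')$). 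Applying the induction hypothesis to $G'$ and simplifying, we get $i(G)\le\frac59 n$ as soon as
$$4|I_v|+4\le 5\deg(v);$$
call a vertex satisfying this inequality \emph{good}. Thus it suffices to show that every isolate-free $G$ with $\Delta(G)\le 4$ either contains a good vertex or has $\Delta(G)\le 3$ --- for in the latter case the theorem of Akbari et al.\ quoted above gives $i(G)\le\frac12 n\le\frac59 n$ (this also disposes of the small base cases, since $n\le 4$ forces $\Delta(G)\le 3$).

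So assume $\Delta(G)=4$ and fix a vertex $v$ with $\deg(v)=4$; the displayed inequality makes $v$ good exactly when $|I_v|\le 4$ (for degrees $3$, $2$, $1$ the thresholds are $|I_v|\le 2$, $|I_v|\le 1$, $I_v=\emptyset$, respectively). Since $N(w)\subseteq N(v)$ for every $w\in I_v$, and each of the four vertices of $N(v)$ spends one edge on $v$ and has degree at most $4$, we have $\sum_{w\in I_v}\deg(w)\le 12$. If $|I_v|\le 4$ we are done, so suppose $|I_v|\ge 5$; then $I_v$ must contain a vertex $w$ of degree $1$ or $2$, and I would finish by a short case analysis on $w$ and its neighborhood. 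For example, if $w$ is a pendant whose stem $x\in N(v)$ has no other pendant, then $w$ is good since $I_w=\emptyset$; if $x$ has several pendants, then $\deg(x)\in\{3,4\}$, and a direct computation of $I_x$ shows $x$ is good unless the configuration around $x$ is so rigid that $N(v)$ could not have supported $|I_v|\ge 5$ in the first place. The degree-$2$ case runs the same way: a non-good degree-$2$ member of $I_v$ has a neighbor $x\in N(v)$ which, chasing the definitions, has degree $4$ and a twin $x'$ with $N(x)=N(x')$, and then $|I_x|\le 4$, so $x$ is good. In every branch one lands on a good vertex or contradicts $|I_v|\ge 5$, completing the induction.

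I expect the bookkeeping in this last step to be the main obstacle: the subcases must be organized so that the argument is genuinely exhaustive yet short, and one should double-check that no reduction ever applies to $H(3,2)$ in a way inconsistent with its extremality. A convenient device is probably to choose $v$, among all degree-$4$ vertices, so as to maximize its number of pendant neighbors (or some similar tie-break), which should route the analysis straight to the rigid subcases. Finally, for sharpness one verifies directly that $i(H(3,2))=5=\frac59\cdot 9$: any independent dominating set contains at most one vertex of the triangle, and for each triangle vertex not chosen its two pendants must both be taken, forcing size at least $1+2\cdot 2=5$ (and at least $6$ if no triangle vertex is chosen), while for any triangle vertex $u$, the set $\{u\}$ together with the four pendants at the other two triangle vertices is an independent dominating set of size $5$. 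This is consistent with the reduction: each triangle vertex $u$ has $\deg(u)=4$ and $|I_u|=4$, so $u$ is good with the inequality $4|I_u|+4\le 5\deg(u)$ tight --- which is exactly why $H(3,2)$ attains equality rather than violating the bound.
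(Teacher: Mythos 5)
Your reduction is sound, and it is in essence the same engine the paper uses: the ``good vertex'' inequality $4|I_v|+4\le 5\deg(v)$ is exactly the negation of the inequality the paper derives for every vertex of a minimum counterexample. (The paper accounts for the vertices isolated by deleting $N[v]$ via the strengthened weighted statement $9i(G)\le 5|V(G)|+4n_0(G)$ rather than by absorbing $I_v$ into the dominating set, but the two devices are equivalent.) Your verification that $\{v\}\cup I_v\cup D'$ is an independent dominating set, the threshold computations for each degree, and the sharpness check $i(H(3,2))=5$ are all correct.

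The gap is in the only hard part of the theorem: showing that a good vertex must exist when $\Delta(G)=4$. You correctly reduce to a degree-$4$ vertex $v$ with $|I_v|\ge 5$ and extract from the edge count a vertex $w\in I_v$ of degree at most $2$, but the ensuing case analysis is only gestured at, and at least one asserted intermediate step does not follow as claimed: in the degree-$2$ case, the non-goodness of $w$ says only that two vertices besides $w$ have all their neighbors inside $N(w)$, which does not force the neighbor $x$ to have degree $4$ or to have a twin $x'$ with $N(x)=N(x')$, so the conclusion $|I_x|\le 4$ is not established. Likewise, in the pendant case the assertion that a ``direct computation of $I_x$'' finishes matters hides real work (for $\deg(x)=4$ one can have $|I_x|$ up to $6$ a priori, and ruling out $|I_x|\ge 5$ requires chasing the structure back through $v$). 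For comparison, the paper's completion of this step is a genuine multi-stage argument: it shows a minimum counterexample has $\delta(G)=1$ (using $|I_G(v)|\le 3\deg_G(v)/\delta(G)$ to eliminate $\delta\ge 2$ and $\delta\ge 3$), then proves --- by choosing a vertex maximizing its number of pendant neighbors --- that every vertex with a pendant neighbor is a $4$-vertex with exactly two pendant neighbors, and finally pins the structure down to a unique $10$-vertex graph that is checked by hand not to be a counterexample. Your plan could likely be pushed through along similar lines, but as written the exhaustiveness of the branches is precisely what is missing, and it is where essentially all of the content of the proof lives.
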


The above theorem is tight, as demonstrated by the graph $H(3, 2)$. 
We actually think the family of graphs $H\left(\left\lfloor\frac{D}{2}\right\rfloor+1, \left\lceil\frac{D}{2}\right\rceil\right)$ has the maximum independent domination number among  isolate-free graphs with maximum degree at most $D$, so we put forth the following conjecture:

\begin{conjecture}\label{con:idset-general}
If $G$ is an isolate-free graph with maximum degree $D\geq1$, then 
\begin{eqnarray*}
i(G) \le \begin{cases}
\frac{D^2+4}{(D+2)^2}|V(G)| & \text{ if $D$ is even,} \\
\frac{D^2+3}{(D+1)(D+3)}|V(G)| & \text{ if $D$ is odd.}
\end{cases}
\end{eqnarray*}
\end{conjecture}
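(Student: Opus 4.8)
The plan is to prove the conjecture by a double induction, on the maximum degree $D$ (the cases $D\le 4$ being already settled by Akbari et al.\ and by Theorem~\ref{thm:idset4}) and, for each fixed $D$, on $|V(G)|$. Write $c_D$ for the claimed ratio; the even and odd cases of the statement are captured uniformly by
\[
c_D=\frac{D^2+4-(D\bmod 2)}{(D+2)^2-(D\bmod 2)},\qquad 1-c_D=\frac{4D}{(D+2)^2-(D\bmod 2)},
\]
so that $c_D$ is increasing in $D$ and $1-c_D=\Theta(1/D)$. The first bookkeeping step (Step $0$) is to confirm that the proposed extremal graph $G^\ast=H(\lfloor D/2\rfloor+1,\lceil D/2\rceil)$ is genuinely tight: a short argument shows that a minimum independent dominating set takes one clique vertex together with all pendants of the remaining clique vertices, so $i(G^\ast)=1+\lfloor D/2\rfloor\lceil D/2\rceil$ while $|V(G^\ast)|=(\lfloor D/2\rfloor+1)(\lceil D/2\rceil+1)$, which gives exactly $i(G^\ast)=c_D|V(G^\ast)|$ in both parities. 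Since $c_{D'}\le c_D$ for $D'\le D$, a minimum counterexample $G$ may be taken connected with $\Delta(G)=D$, and the conjecture may be assumed for all smaller maximum degree.

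Second, I would build reducible configurations around low-degree vertices, the leaves and support vertices being decisive because they drive the extremal example. For a support vertex $w$ with leaf set $L(w)$ one branches: either $w$ enters the independent dominating set, dominating $N[w]$ at once, or $w$ stays out and all of $L(w)$ is forced into the set. In each branch one deletes the resolved closed neighborhood $X$, adds the newly isolated vertices of $G-X$ to the set (they must lie in every independent dominating set), and applies the inner induction to the isolate-free remainder through $i(G-X)\le c_D(|V(G)|-|X|)$. A configuration is reducible when the number of added set-vertices is at most $c_D|X|$ once the forced isolated vertices are accounted for. The aim of this stage is to show that a minimum counterexample contains no reducible configuration, which should pin its local structure down to clique-plus-pendant gadgets, that is, to copies of pieces of $G^\ast$.

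Third comes the genuinely hard part, and the reason the statement remains a conjecture. One cannot certify the bound by any nearest-neighbor redistribution of charge, and this is already visible on $G^\ast$ itself: writing $D=2m$ and asking each vertex outside the set to donate to its set-neighbors the surplus $\tfrac1{c_D}-1=\tfrac{2m}{m^2+1}$ needed to reach the target charge $\tfrac1{c_D}$, an \emph{unchosen} clique vertex $w$ is adjacent to its $m$ pendants (all in the set) and would have to give away at least $m\cdot\tfrac{2m}{m^2+1}=\tfrac{2m^2}{m^2+1}>1$ for $m\ge 2$, exceeding the charge it holds. The chosen clique vertex meanwhile collects a matching surplus from its own pendants, so the totals balance exactly, yet this surplus is stranded at the chosen vertex and must be carried across the gadget to the starved pendants that actually sit in the set—something no single local rule accomplishes. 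Consequently any valid certificate must transport charge non-locally, forcing an inductive or global accounting. The main obstacle is therefore to run the reduction/discharging hybrid so that the clique-plus-pendant gadgets are forced to be the \emph{unique} equality configuration for every $D$ and both parities at once: the branching inequalities and the charge-transport bounds are parity-sensitive and tighten as $D$ grows, and controlling them uniformly—rather than by the finite case analysis that sufficed for $D\le 4$—is exactly what is still open.
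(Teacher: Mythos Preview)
The statement you are addressing is Conjecture~\ref{con:idset-general}, and the paper does \emph{not} prove it. It is explicitly put forward as open: the paper notes that $D\le 2$ is easy, that $D=3$ follows from Akbari et al., that $D=4$ is Theorem~\ref{thm:idset4}, and that the authors have verified $D\in\{5,6,7,8\}$ by ad~hoc case checking which they omit. There is thus no ``paper's own proof'' to compare your proposal against.

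Your write-up is, correspondingly, not a proof either, and you are candid about this: your third step names the obstruction and closes with ``is exactly what is still open.'' What you have produced is a strategy outline plus a diagnosis of why it stalls. The concrete pieces you do carry out are fine. Your Step~0 verification that $H(\lfloor D/2\rfloor+1,\lceil D/2\rceil)$ attains the bound is correct and matches what the paper asserts without details; your monotonicity claim $c_{D'}\le c_D$ for $D'\le D$ holds (with equality for $D\le 3$), so the reduction to a connected minimum counterexample with $\Delta(G)=D$ is legitimate.

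The gap you identify is the real one. Your observation that on the extremal graph a purely local (nearest-neighbour) discharging cannot certify the bound, because the surplus at the chosen clique vertex must be transported across the gadget, explains why a single uniform rule is unlikely and why each small $D$ in the paper's experience needed its own case analysis. That is an honest assessment, not a proof; but since the paper offers nothing stronger for general $D$, there is nothing further to compare.
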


If Conjecture~\ref{con:idset-general} is true, then $H\left(\left\lfloor\frac{D}{2}\right\rfloor+1, \left\lceil\frac{D}{2}\right\rceil\right)$ demonstrates that the bound is tight. 
Note that one can easily check that Conjecture~\ref{con:idset-general} is true for $D\leq 2$, and we remark that Conjecture~\ref{con:idset-general} is true for $D=3$ and $D=4$ by the result of Akbari et al.~\cite{akbari2021independent} and by Theorem~\ref{thm:idset4}, respectively. 
In addition, we checked that the conjecture holds for $D \in \{5,6,7,8\}$, but we decided to not include the proofs as it mainly consists of tedious case checking.
For results regarding the ratio of the independent domination number and the domination number for graphs with bounded maximum degree, see 
\cite{rad2013note,furuya2014ratio}.

In Section~\ref{sec:mainthm:kreg}, we prove Theorem~\ref{mainthm:kreg}.
The proof essentially boils down to one (implicit) inequality, which was inspired by an idea  in~\cite{lam1999independent}.
However, unlike their proof, we use discharging to prove that the inequality holds. 
Using Theorem~\ref{mainthm:kreg} and an idea in~\cite{southey2013domination}, we prove  Theorem~\ref{thm:ratio:kreg} in Section~\ref{sec:ratio}.
In Section~\ref{sec:idset4}, we prove a statement slightly stronger than Theorem ~\ref{thm:idset4}, where the proof adopts the approach of~\cite{dorbec2015independent}. 

We end the introduction with some notation and terminology used in this paper. 
Given a graph $G$, let $\Delta(G)$ and $\delta(G)$ denote the maximum degree and the minimum degree, respectively, of $G$.
For each $X \subseteq V(G)$, let  $G-X$  denote the subgraph of $G$ induced by $V(G) \setminus X$.
Let $n_0(G)$ denote the number of isolated vertices of $G$.
For each vertex $v \in V(G)$,  the degree of $v$ in $G$ is denoted by $\deg_G(v)$. 
For each $v \in V(G)$, let $N_G(v)$ denote the set of neighbors of $v$, and let 
$N_G[v]=N_G(v)\cup \{v\}$.
For each $X \subseteq V(G)$, let $N_G(X) = \bigcup_{v \in X} N_G(v)$ and $N_G[X] = N_G(X) \cup X$.
For a vertex $v$ and $X\subseteq V(G)$, an {\it $X$-neighbor} of $v$ is a neighbor of $v$ in $X$. 
A \textit{minimum dominating set} of $G$ is a dominating set of $G$ with size $\gamma(G)$, and a \textit{minimum independent dominating set} of $G$ is an independent dominating set of $G$ with size $i(G)$.

\section{Independent domination of regular graphs}\label{sec:mainthm:kreg}

In this section, we prove Theorem~\ref{mainthm:kreg}. 
For $I\subseteq V(G)$, let $G_I$ be the spanning bipartite graph obtained from $G$ by deleting the edges joining two vertices in $I$ and the edges joining two vertices in $V(G)\setminus I$. 
For brevity, denote $N_{G_I}(v)$ and $\deg_{G_I}(v)$ by $N_I(v)$ and $\deg_I(v)$, respectively.
 
For $k = 3$, the theorem holds by Lam, Shiu, and Sun~\cite{lam1999independent}. 
Fix $k \ge 4$.
Let $G \neq K_{k,k}$ be a connected $k$-regular graph. 
Choose a minimum independent dominating set $I$ of $G$ that
\begin{itemize}
 \item[(1)] minimizes the number of subgraphs in $G_I$ isomorphic to $K_{k-1,k}$, and 
 \item[(2)] maximizes the number of pendent vertices $v$ of $G_I$ such that  the $I$-neighbor $w$ of $v$ has a neighbor $x\in V(G)\setminus I$ satisfying 
 $\deg_I(x)=k$.
\end{itemize}
Since $I$ is an independent set of $G$, all vertices in $I$ have degree $k$ in $G_I$. 
Let $J=V(G)\setminus I$ and  $J_i=\{v\in J\mid |N_I(v)|=i\}$ for each $i\in [k]$.
Note that $J_1,\ldots,J_k$ form a partition of $J$, since $I$ is a dominating set of a $k$-regular graph $G$. 
For two integers $s, t\in\{1, \ldots,  k\}$, let $J_{[s,t]}$ denote $\bigcup_{i \in \{s,\ldots,t\}} J_i$.

For each $v\in J_k$, let $X(v)$ be the set of vertices in $J\setminus\{v\}$ whose $I$-neighbors are in $N_G(v)$, namely, 
$$X(v)=\{ w\in J\setminus\{v\} \mid N_I(w) \subseteq N_G(v)\}.$$ 

For each $v\in J_{[1,k-1]}$, let $Y(v)$ be the set of vertices $w\in J_k$ such that $v$ belongs to $X(w)$, namely,
$$Y(v)=\{ w\in J_k\mid v\in X(w)\}.$$ 
For $u\in {J_{[1,{k-1}]}}$ and $v\in J_k$, note that $u\in X(v)$ if and only if $v\in Y(u)$.
Using this terminology, (2) can be rephrased as the following:

\smallskip

 (2) maximizes the number of vertices $v\in J_1$ such that $Y(v)\neq\emptyset$.

\begin{claim}\label{claim-basic-k}
The following holds:
\begin{itemize}
    \item[\rm(i)] If $v\in J_{[1,k-1]}$, then $|Y(v)|\le k-1$. 
    \item[\rm (ii)] If $v \in J_i$ for $i \in [k]$, then there are at least $i$ vertices $w$ such that $N_I(w) \subseteq N_I(v)$.
    In particular, if $v\in J_k$, then $|X(v)|\ge k-1$.
\end{itemize}
\end{claim}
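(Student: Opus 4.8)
The plan is to prove the two parts by separate short local arguments; part~(i) uses only that $I$ is independent and $G$ is $k$-regular, while part~(ii) additionally invokes the minimality of $I$ (but neither of the extra optimality conditions (1), (2)).

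For~(i), since $v\in J_{[1,k-1]}$ its $I$-neighborhood is nonempty, so I would fix some $u\in N_I(v)$. For each $w\in Y(v)$ we have $w\in J_k$, hence $N_G(w)=N_I(w)$; combined with $v\in X(w)$, i.e.\ $N_I(v)\subseteq N_G(w)$, this gives $u\in N_I(w)$, so $w\in N_G(u)$. Since also $v\in N_G(u)$, while $v\notin Y(v)$ because $Y(v)\subseteq J_k$ and $v\notin J_k$, the set $\{v\}\cup Y(v)$ consists of $|Y(v)|+1$ distinct vertices of $N_G(u)$. As $\deg_G(u)=k$, this yields $|Y(v)|\le k-1$.

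For~(ii), write $U=N_I(v)$, so $|U|=i$, and set $S=\{w\in J:\ N_I(w)\subseteq U\}$; note $v\in S$. It suffices to prove $|S|\ge i$, since when $v\in J_k$ we have $N_G(v)=U$ and hence $X(v)=S\setminus\{v\}$, so $|X(v)|=|S|-1\ge k-1$. Suppose for contradiction that $|S|\le i-1$, and consider $I'=(I\setminus U)\cup\{v\}$. Then $I'$ is independent, because every $I$-neighbor of $v$ lies in $U$ and has been deleted; moreover $V(G)\setminus N_G[I']\subseteq S\setminus N_G[v]$, since a vertex of $I$ not in $I'$ is a vertex of $U$, which is adjacent to $v\in I'$, while a vertex $w\in J$ with no neighbor in $I\setminus U$ satisfies $N_I(w)\subseteq U$, so $w\in S$. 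Now extend $I'$ to a maximal independent set $\tilde I$ of $G$; since a vertex can be added only if it lies outside $N_G[I']$, we get $\tilde I\setminus I'\subseteq S\setminus N_G[v]$, and because $v\in S\cap N_G[v]$ we have $|S\setminus N_G[v]|\le|S|-1\le i-2$. Therefore $\tilde I$ is an independent dominating set with $|\tilde I|\le(|I|-i+1)+(i-2)=|I|-1<i(G)$, contradicting the minimality of $I$.

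The only delicate point, and the one I would be most careful about, is the bookkeeping in~(ii): checking that the exchange set $I'$ is genuinely independent, that $S\setminus N_G[v]$ really contains every vertex left undominated by $I'$, and that the maximal-independent-set extension adds only such vertices. The strict inequality at the end relies on $v\in S$, which is exactly why one replaces the whole set $U$ by the single vertex $v$ rather than simply deleting $U$.
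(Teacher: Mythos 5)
Your proof is correct and follows essentially the same route as the paper: part (i) is the same degree count at a vertex of $N_I(v)$, and part (ii) is the same exchange argument, deleting $N_I(v)$ from $I$ and re-dominating with an independent set of vertices $w$ satisfying $N_I(w)\subseteq N_I(v)$ (the paper takes a maximal independent set of that induced subgraph containing $v$, while you add $v$ and extend to a maximal independent set of $G$ — the same count either way). No issues.
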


\begin{proof}
Since $G$ is $k$-regular, (i) follows from the definition of $Y(v)$.

To show (ii), let $v\in J_i$ for some $i \in [k]$. 
Note that  $I'=(I\setminus N_G(v))\cup \{w \mid N_I(w) \subseteq N_I(v)\}$ contains an independent dominating set of $G$. 
To be precise, if $X'$ is a maximal independent set of the subgraph induced by $\{w \mid N_I(w) \subseteq N_I(v)\}$ such that $v\in X'$,
then $(I\setminus N_G(v))\cup X'$ is  an independent dominating set of $G$.
If $|\{w \mid N_I(w) \subseteq N_I(v)\}|\le i-1$, then $|X'|\le i-1$, so $I'$ is a smaller independent dominating set than $I$, which is a contradiction to the choice of $I$. Hence,  $|\{w \mid N_I(w) \subseteq N_I(v)\}|\ge i$, so (ii) holds.
\end{proof}

\begin{claim}\label{claim-temp-k}
For $v \in J_k$, if $X(v)$ contains at most $k-3$ vertices in $J_1$  and there are $k-1$ distinct vertices $v_1,\ldots,v_{k-1} \in X(v) \cap J_{k-1}$ such that $N_I(v_1)=\cdots=N_I(v_{k-1})$, then $v_1,\ldots,v_{k-1}$ have a common $J_1$-neighbor $u$ such that $|Y(u)| \le k-3$ and $\cup_{w\in J_{k-1}\cap N_G(u)}Y(w)=\{v\}$.
\end{claim}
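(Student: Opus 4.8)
The plan is to first read off the rigid local configuration forced by the hypotheses, and then to rule out every competing configuration by deleting a part of $I$, putting back a cheaper (or equally cheap) independent dominating set, and invoking either the minimality of $|I|$ or one of the two extremal conditions (1), (2) imposed on $I$. To set up, write $W=N_I(v_1)=\cdots=N_I(v_{k-1})$; since each $v_j\in X(v)$ and $v\in J_k$ we have $W\subsetneq N_G(v)$, so writing $W=\{a_1,\dots,a_{k-1}\}$ we get $N_G(v)=\{a_1,\dots,a_{k-1},a_k\}$ with $a_k\notin W$. Because $G$ is $k$-regular and each $a_i$ with $i\le k-1$ is adjacent to the $k$ distinct vertices $v,v_1,\dots,v_{k-1}$, we in fact have $N_G(a_i)=\{v,v_1,\dots,v_{k-1}\}=:B$, so $G_I$ contains a copy of $K_{k-1,k}$ on $A\cup B$, where $A:=W$. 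Finally, each $v_j\in J_{k-1}$ has exactly one neighbor $u_j\in J$.

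Next I would establish three facts in turn. \textbf{(a)} The vertices $v_1,\dots,v_{k-1}$ are pairwise non-adjacent: otherwise replacing $A$ in $I$ by a maximum independent subset of $\{v_1,\dots,v_{k-1}\}$ yields a smaller independent dominating set; in particular no $u_j$ equals a $v_\ell$. \textbf{(b)} $u_1=\cdots=u_{k-1}=:u$: here $I':=(I\setminus A)\cup\{v_1,\dots,v_{k-1}\}$ is again a minimum independent dominating set, and a short case analysis shows that the only copy of $K_{k-1,k}$ in $G_I$ meeting $A\cup\{v_1,\dots,v_{k-1}\}$ is the one on $A\cup B$, whereas the only copy of $K_{k-1,k}$ in $G_{I'}$ meeting a vertex that switched sides is a copy on $\{v_1,\dots,v_{k-1}\}\cup(A\cup\{u\})$ that is present exactly when all $u_j$ coincide; since condition (1) forces $G_{I'}$ to have at least as many copies of $K_{k-1,k}$ as $G_I$, that copy must be present. \textbf{(c)} $u\in J_1$: the vertex $u$ already has the $k-1$ distinct $J$-neighbors $v_1,\dots,v_{k-1}$, and its one remaining neighbor lies in $I$ since $I$ dominates $u$; writing $N_I(u)=\{a\}$, note $a\notin A$, since otherwise $u\in N_G(a_i)=B$, impossible for $u\in J_1$. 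With this the second half of the conclusion is immediate: $N_G(u)=\{v_1,\dots,v_{k-1},a\}$ with $a\in I$, so $J_{k-1}\cap N_G(u)=\{v_1,\dots,v_{k-1}\}$, and for each $j$ we have $Y(v_j)=\{w\in J_k:W\subseteq N_G(w)\}=\bigl(\bigcap_{i}N_G(a_i)\bigr)\cap J_k=B\cap J_k=\{v\}$, so the union is $\{v\}$.

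It remains to prove $|Y(u)|\le k-3$. Since $Y(u)=N_G(a)\cap J_k$, and since $N_G(a_i)\cap J_1=\emptyset$ for $i\le k-1$, the hypothesis that $X(v)$ contains at most $k-3$ vertices of $J_1$ says precisely that $|N_G(a_k)\cap J_1|\le k-3$. I would first show $a=a_k$: if $a\ne a_k$, then deleting $\{a\}\cup A$ from $I$ and adding $\{v_1,\dots,v_{k-1}\}$ together with a maximal independent subset of $N_G(a)\cap J_1$ produces a minimum independent dominating set violating condition (1) or (2). Then, with $a=a_k$, I would transfer the bound through the passage $I\to I'$ of step (b): under it $v$ becomes a pendent vertex of $G_{I'}$ with $N_{I'}(v)=\{a_k\}$, and one gets $|Y_{I'}(v)|=|N_G(a_k)\cap J_k|=|Y(u)|$, while the hypothesis turns into the corresponding $J_1$-bound for this pendent vertex in $G_{I'}$; comparing $I$ with $I'$ via conditions (1) and (2) then yields $|Y(u)|\le k-3$.

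I expect the main obstacle to be step (b) and this last step: each requires a careful census of which vertices move between $I$ and $J$ under the relevant swap and of how this changes the number of copies of $K_{k-1,k}$ in the associated bipartite graph — and, for the last step, the count of "good pendent vertices" tracked by condition (2) — so as to be certain that one of the extremal choices is genuinely violated unless the asserted configuration occurs. The remaining steps (the structural setup and (a), (c)) should be routine once this bookkeeping is in place.
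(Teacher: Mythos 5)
Your setup and the first two conclusions are handled essentially as in the paper: the swap $I'=(I\setminus W)\cup\{v_1,\dots,v_{k-1}\}$ together with condition (1) forces the common neighbor $u$, degree counting puts $u$ in $J_1$, and $Y(v_j)=\{v\}$ gives $\bigcup_{w\in J_{k-1}\cap N_G(u)}Y(w)=\{v\}$. That part is fine.

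The genuine gap is in your proof of $|Y(u)|\le k-3$, which you yourself flag as the main obstacle. Your route hinges on first showing $N_I(u)=\{a_k\}$, i.e.\ that the unique $I$-neighbor of $u$ coincides with the ``extra'' $I$-neighbor $w'$ of $v$. Nothing forces this: $u$'s $I$-neighbor is just some vertex of $I$ adjacent to $u$, and the paper (see its Figure for this claim) explicitly treats $u'$ and $w'$ as distinct vertices. Your proposed argument for it --- delete $\{a\}\cup W$ from $I$, add $\{v_1,\dots,v_{k-1}\}$ plus a maximal independent subset of $N_G(a)\cap J_1$ --- does not even produce a set of size at most $|I|$ in general, since $|N_G(a)\cap J_1|$ is unbounded by anything in your hypotheses. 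And even granting $a=a_k$, the inequality $|N_G(a_k)\cap J_1|\le k-3$ (which is what the hypothesis on $X(v)\cap J_1$ amounts to) does not bound $|N_G(a_k)\cap J_k|=|Y(u)|$ by $k-3$; ``comparing $I$ with $I'$ via conditions (1) and (2)'' is not an argument that bridges this. The paper closes this step quite differently and without any need for $u'=w'$: it considers $I''=(I\setminus\{w',v_1',\dots,v_{k-1}',u'\})\cup\{v\}\cup(X(v)\cap J_1)\cup(N_G(u')\setminus Y(u))$, checks that it contains an independent dominating set, and counts $|I''|\le |I|-(k+1)+1+|X(v)\cap J_1|+(k-|Y(u)|)\le |I|+k-3-|Y(u)|$; minimality of $|I|$ then forces $|Y(u)|\le k-3$. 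This single cardinality estimate is exactly where the hypothesis $|X(v)\cap J_1|\le k-3$ is consumed, and it is the piece your proposal is missing.
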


\begin{proof}
Assume $N_I(v_1)=\cdots=N_I(v_{k-1}) = \{v'_1,\ldots,v'_{k-1}\}$ and let $N_G(v) = \{w',v'_1,\ldots,v'_{k-1}\}$.
Now, $I' = (I\setminus\{v'_1,\ldots,v'_{k-1}\}) \cup \{v_1,\ldots,v_{k-1}\}$ contains an independent dominating set of $G$.
See Figure~\ref{fig:37-3-k} for an illustration.

\begin{figure}[h!]
\centering
\includegraphics[height=3.5cm, page = 4]{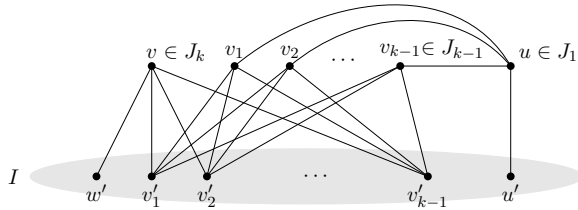}
\caption{An illustration for the proof of Claim~\ref{claim-temp-k} }\label{fig:37-3-k}
\end{figure}

Since $I$ and $I'$ have the same size but $I$ was chosen over $I'$, according to condition (1) of the choice of $I$, the number of subgraphs of $G_{I'}$ isomorphic to $K_{k-1,k}$ in $G_{I'}$ is at least that in $G_{I}$. 
Since $v,v_1,\ldots,v_{k-1},v'_1,\ldots,v'_{k-1}$ form a graph isomorphic to $K_{k-1,k}$ in $G_I$, it follows that $v_1,\ldots,v_{k-1}$ are part of a subgraph of $G_{I'}$ isomorphic to $K_{k-1,k}$.
Thus $v_1,\ldots,v_{k-1}$ have a common neighbor $u$ in $J$.
Since $Y(v_i) = \{v\}$ for all $i \in [k-1]$, we have $\cup_{w\in J_{k-1}\cap N_G(u)}Y(w)=\{v\}$.
Since $I$ must dominate $u$, we know  $u\in J_1$, and let  $u'$ be the $I$-neighbor of $u$.

Now, $I'=(I\setminus\{w',v'_1,\ldots,v'_{k-1},u'\})\cup \{v\} \cup (X(v) \cap J_1)\cup (N_G(u')\setminus Y(u))$ contains an independent dominating set of $G$ since $u$ and $v$ dominate $w', v'_1, \ldots, v'_{k-1}, u'$, and every vertex that is not dominated by $I\setminus\{w',v'_1,\ldots,v'_{k-1},u'\}$ is dominated by $\{v\} \cup (X(v)\cap J_1 )\cup (N_G(u')\setminus Y(u)$).
Note that $|I'|=|I|-(k+1)+1+|X(v)\cap J_1|+k-|Y(u)|\leq |I|+k-3-|Y(u)|$.
If $|Y(u)|\ge k-2$, then $|I'|<|I|$, which is a contradiction to the choice of $I$. 
Hence, $|Y(u)|\le k-3$.
\end{proof}

\begin{claim}\label{claim-basic2-k}
For $v\in J_k$, suppose that $X(v)\cap J_{[2,k-2]}=\emptyset$. 
If either $X(v)\cap J_1=\emptyset$ or $X(v)\cap J_1=\{w\}$ where $|Y(w)|=k-1$, then $X(v)\setminus J_1$ consists of $k-1$ distinct vertices 
in $J_{k-1}$ with a common $J_1$-neighbor $u$ such that $|Y(u)| \le k-3$ and $\cup_{w\in J_{k-1}\cap N_G(u)}Y(w)=\{v\}$.
\end{claim}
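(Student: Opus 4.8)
The plan is to pin down the structure of $X(v)$ by combining Claim~\ref{claim-basic-k}(ii), a swapping argument in the spirit of the proof of Claim~\ref{claim-temp-k}, and a count of the edges incident to $N_I(v)$, and then to invoke Claim~\ref{claim-temp-k} to finish. Throughout, fix $v\in J_k$ and write $S=N_I(v)$; since $G$ is $k$-regular, $N_G(v)=S$ and $|S|=k$, and more generally every vertex of $(X(v)\cap J_k)\cup\{v\}$ has all of its neighbors in $S$, while every vertex of $X(v)\cap J_{k-1}$ has exactly one neighbor outside $I$. By the hypothesis $X(v)\cap J_{[2,k-2]}=\emptyset$, the set $X(v)$ is partitioned into $X(v)\cap J_1$, $X(v)\cap J_{k-1}$, and $X(v)\cap J_k$; write $a,b,c$ for their sizes, so $a+b+c=|X(v)|\ge k-1$ by Claim~\ref{claim-basic-k}(ii), and $a\le 1$ by hypothesis.

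The first main step records a swap: for any maximal independent set $X'$ of the subgraph of $G$ induced by $\{v\}\cup X(v)$, the set $(I\setminus S)\cup X'$ is an independent dominating set of $G$. Indeed, it is independent because every vertex of $\{v\}\cup X(v)$ has all of its $I$-neighbors in $S$, and it is dominating because every vertex of $V(G)\setminus I$ all of whose $I$-neighbors lie in $S$ belongs to $\{v\}\cup X(v)$, every vertex of $S$ is adjacent to $v$, and $v$ lies in every such $X'$ since $v$ is isolated in the induced subgraph. Minimality of $|I|=i(G)$ thus forces $|X'|\ge k$ for every such $X'$. Since $v$ and every vertex of $X(v)\cap J_k$ are isolated in this induced subgraph while the vertices of $X(v)\cap J_{k-1}$ induce there a graph of maximum degree at most $1$, this translates into a lower bound (in terms of $c$) on the size of the smallest maximal independent set of the subgraph induced by $X(v)\cap(J_1\cup J_{k-1})$. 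On the other hand, counting the $k^2$ edge-endpoints at $S$ and using that $v$ sends $k$ edges into $S$ while each vertex of $X(v)\cap J_i$ sends exactly $i$, we get $k+a+(k-1)b+kc\le k^2$. Combining this inequality with $a+b+c\ge k-1$, $a\le 1$, and the lower bound above, I would deduce that $c=0$ and $b=k-1$, so that $X(v)\setminus J_1$ consists of exactly $k-1$ vertices $v_1,\dots,v_{k-1}$ of $J_{k-1}$, which are moreover pairwise nonadjacent. The two borderline possibilities left open by the inequalities alone — a vertex of $J_k$ inside $X(v)$, and $b=k$ — are excluded, as in the proof of Claim~\ref{claim-temp-k}, using the tie-break conditions~(1) and~(2) on $I$: in each bad configuration a local swap of the form $(I\setminus S)\cup\{v,v_1,\dots,v_{k-1}\}$ (or a variant) produces a minimum independent dominating set that contradicts the choice of $I$, and here one distinguishes the cases $a=0$ and $a=1$, exploiting in the latter that the unique vertex $w$ of $X(v)\cap J_1$ has $|Y(w)|=k-1$, which determines the $I$-neighbor of $w$.

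With $X(v)\setminus J_1=\{v_1,\dots,v_{k-1}\}$ in hand, the last structural step is to show $N_I(v_1)=\cdots=N_I(v_{k-1})$. If instead two of them had $I$-neighborhoods $S\setminus\{s_i\}$ and $S\setminus\{s_j\}$ with $s_i\ne s_j$, then $(I\setminus S)\cup\{v,v_1,\dots,v_{k-1}\}$ is again a minimum independent dominating set, and it would have strictly fewer subgraphs isomorphic to $K_{k-1,k}$ in the associated bipartite graph — or, in case of a tie, strictly more pendent vertices of the kind counted in~(2) — than $I$, contradicting the choice of $I$. Once $N_I(v_1)=\cdots=N_I(v_{k-1})$ is established, and since $X(v)$ has at most $1\le k-3$ vertices in $J_1$ (as $k\ge 4$), Claim~\ref{claim-temp-k} applies to $v$ and $v_1,\dots,v_{k-1}$, and yields a common $J_1$-neighbor $u$ with $|Y(u)|\le k-3$ and $\bigcup_{w\in J_{k-1}\cap N_G(u)}Y(w)=\{v\}$. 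Since $X(v)\setminus J_1=\{v_1,\dots,v_{k-1}\}$, this is exactly the desired conclusion.

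The main obstacle is the elimination of the two borderline configurations together with the proof that the $v_i$ share an $I$-neighborhood: all three require showing that a minimum independent dominating set obtained by a small local swap is strictly preferable to $I$ under conditions~(1) and~(2), and the required bookkeeping — tracking exactly which $K_{k-1,k}$ subgraphs and which pendent vertices are created or destroyed by the swap — is delicate, especially in the subcase where $X(v)$ meets $J_1$.
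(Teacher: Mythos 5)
Your setup (splitting $X(v)$ into its $J_1$, $J_{k-1}$, $J_k$ parts, the swap showing every maximal independent set of $G[\{v\}\cup X(v)]$ has size at least $k$, and the edge count into $N_G(v)$) is sound and close in spirit to the paper's use of Claim~\ref{claim-basic-k}(ii). But the heart of the claim is exactly what you defer. Your inequalities do not exclude the configuration $a=1$, $b=0$, $c=k-2$, i.e.\ $X(v)=\{w,v_1,\dots,v_{k-2}\}$ with every $v_i\in J_k$: there the unique maximal independent set of $G[\{v\}\cup X(v)]$ has size exactly $k$, so minimality of $I$ gives nothing, and the edge count $1+k(k-2)\le k(k-1)$ is satisfied. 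Saying this case (and the common-neighborhood statement) is handled ``as in the proof of Claim~\ref{claim-temp-k}'' is not accurate: Claim~\ref{claim-temp-k} \emph{presupposes} $k-1$ vertices of $J_{k-1}$ with a common $I$-neighborhood, and its proof does not treat these configurations. In the paper, eliminating this configuration is the bulk of the proof of Claim~\ref{claim-basic2-k}: one performs the minimal swap $I^*=(I\setminus\{w'\})\cup\{w\}$, uses condition (1) to conclude that $w$ must lie in a new copy of $K_{k-1,k}$ in $G_{I^*}$ (otherwise the count of such subgraphs would drop), and then either constructs a strictly smaller independent dominating set from that copy or exhibits vertices ($w''_1$ and $w'$) in $J_1^*$ with $Y^*\neq\emptyset$, contradicting condition (2). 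Your proposed swap $(I\setminus S)\cup\{v,v_1,\dots,v_{k-1}\}$ has the same size as $I$, and you give no argument that it is preferable under (1) or (2); it is not evident that it is, since deleting $k$ vertices of $I$ and inserting $k$ others can create new copies of $K_{k-1,k}$ elsewhere in $G_{I'}$, and this bookkeeping is precisely the content you acknowledge omitting.

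Two further points. First, the statement $N_I(v_1)=\cdots=N_I(v_{k-1})$ does not need (and should not rest on) a tie-break swap: in the paper it is obtained structurally — when $X(v)\cap J_1=\emptyset$, Claim~\ref{claim-basic-k}(ii) applied to some $w\in X(v)\cap J_{k-1}$ plus $k$-regularity forces $X(v)$ to coincide with the $k-1$ vertices sharing $N_I(w)$ (this also rules out your ``$b=k$'' borderline case); when $X(v)\cap J_1=\{w\}$ with $|Y(w)|=k-1$, all neighbors of $w'$ other than $w$ lie in $J_k$, so no vertex of $X(v)\setminus\{w\}$ can be adjacent to $w'$, forcing $N_I(v_i)=N_G(v)\setminus\{w'\}$ for all $i$. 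Your swap-based alternative is also shaky on its face: the set $(I\setminus S)\cup\{v,v_1,\dots,v_{k-1}\}$ need not even be independent if two vertices of $X(v)\cap J_{k-1}$ are adjacent, and the asserted strict decrease in $K_{k-1,k}$-count is unjustified. Second, the mixed case in which $X(v)\setminus\{w\}$ meets both $J_{k-1}$ and $J_k$ must also be ruled out (the paper does this with a short counting contradiction via Claim~\ref{claim-basic-k}(ii)); your outline does not address it separately. As written, the proposal establishes the easy perimeter of the claim but leaves its genuinely difficult core unproved.
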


\begin{proof} 
Let $w', v'_1, \ldots, v'_{k-1}$ be the $I$-neighbors of $v$.

We first consider the case when $X(v)\cap J_1=\emptyset$, so $X(v)\subseteq J_{[k-1,k]}$. 
Since $G\neq K_{k,k}$, $X(v)$ contains a vertex $w\in J_{k-1}$. 
Assume $w'$ is not adjacent to $w$. 
Let $S$ be the set of vertices $u\in X(v)\cap J_{k-1}$ such that $N_I(w)=N_I(u)$.
Then $|S| \ge k-1$ by Claim~\ref{claim-basic-k}(ii).
Since each vertex in $N_G(v)$ has degree $k$ and $X(v)\cap J_1=\emptyset$, every vertex in $X(v)$ must be in $S$, so $|X(v)|=|S|=k-1$. 
By Claim~\ref{claim-temp-k}, the vertices in $X(v)$ have a common $J_1$-neighbor $u$ such that $|Y(u)|\le k-3$ and $\cup_{w\in J_{k-1}\cap N_G(u)}Y(w)=\{v\}$.

Now we consider the case when $X(v)\cap J_1=\{w\}$ where $|Y(w)|=k-1$. 
Assume $w'$ is the $I$-neighbor of $w$. 
Suppose that $X(v)\setminus\{w\}\subseteq J_{k-1}$. 
Since $|Y(w)|=k-1$, each $z \in N_G(w') \setminus \{w\}$ 
is in $J_k$, so $z\not\in X(v)$. 
Thus, a vertex in $X(v)\setminus\{w\}$ cannot be adjacent to $w'$, so the $I$-neighbors of each vertex in $X(v)\setminus\{w\}$ are $v'_1,\ldots,v'_{k-1}$.
By applying Claim~\ref{claim-basic-k}(ii) to a vertex in $X(v) \cap J_{k-1}$, we know $|X(v)\setminus\{w\} |\ge k-1$.
Since $G$ is $k$-regular, we obtain $|X(v)\setminus\{w\}| = k-1$. 
By Claim~\ref{claim-temp-k}, the vertices in $X(v) \setminus \{w\}$ have a common $J_1$-neighbor $u$ such that  $|Y(u)| \le k-3$ and $\cup_{w\in J_{k-1}\cap N_G(u)}Y(w)=\{v\}$. 
We will complete the proof by showing that $X(v)\setminus\{w\} \subseteq J_{k-1}$ always holds. 

Suppose to the contrary that there is a vertex $v_1\in X(v) \cap J_k$.
Since $X(v) \cap J_1 = \{w\}$ and $X(v) \cap J_{[2,k-2]} = \emptyset$, every vertex in $X(v)\setminus\{w,v_1\}$ is in $J_{[k-1,k]}$. 
Note that $X(v)\setminus\{w,v_1\}\neq \emptyset$ by Claim~\ref{claim-basic-k}(ii) since $k\geq 4$.
If $(X(v)\setminus\{w,v_1\}) \cap J_{k-1} \neq \emptyset$, then $|(X(v)\setminus\{w,v_1\}) \cap J_{k-1}| \ge k-1$ by Claim~\ref{claim-basic-k}(ii).
However, by counting the number of edges between $X(v)\setminus\{w, v_1\}$ and $N_G(v)$, we obtain $|(X(v)\setminus\{w,v_1\}) \cap J_{k-1}|\le k-2$, which is a contradiction.
See Figure~\ref{fig:37-1-k}.

\begin{figure}[h!]
\centering
\includegraphics[height=3.5cm, page = 5]{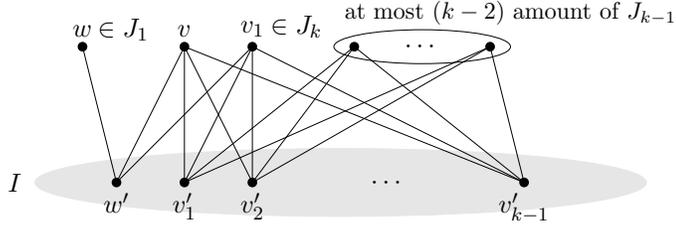}
\caption{An illustration when $(X(v)\setminus\{w,v_1\}) \cap J_{k-1} \neq \emptyset$}\label{fig:37-1-k}
\end{figure}

Thus, we may assume that every vertex in $X(v) \setminus \{w\}$ is in $J_k$.
By Claim~\ref{claim-basic-k}(ii), $|X(v)|\geq k-1$, which further implies that there are exactly $k-2$ vertices $v_1,\ldots,v_{k-2}$ in $X(v) \cap J_k$ since $G$ is $k$-regular. 
In particular, $X(v)=\{w,v_1,\ldots, v_{k-2}\}$. 
See Figure~\ref{fig:37-2-k}.
Note that $I^*=(I\setminus\{w'\})\cup \{w\}$ is an independent dominating set of $G$ with the same size as $I$. 
Let $J^*=V(G)\setminus I^*$ and  $J^*_i=\{v\in J^*\mid |N_{I^*}(v)|=i\}$ for each $i\in [k]$. 
Define $X^*(v)$ and $Y^*(v)$ analogously.

\begin{figure}[h!]
\centering
\includegraphics[height=3.5cm, page = 6]{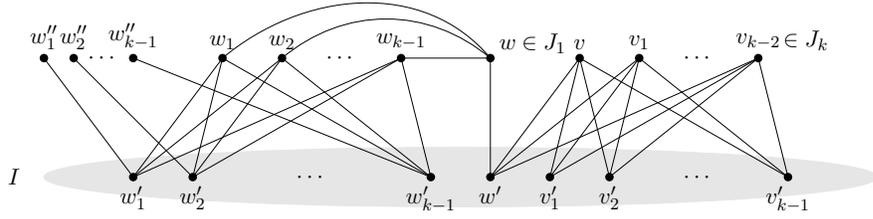}
\caption{An illustration when $X(v)\setminus\{w\}$ is a subset of $J_k$}\label{fig:37-2-k}
\end{figure}

Note that $v,v_1,\ldots,v_{k-2},v'_1,\ldots,v'_{k-1},w'$ form a graph isomorphic to $K_{k-1,k}$ in $G_I$, but not in $G_{I^*}$. 
By condition (1) of the choice of $I$, it follows that $w$ is a vertex of some subgraph $H$ isomorphic to $K_{k-1,k}$ in $G_{I^*}$; 
let the partite sets of $H$ be $\{w_1,\ldots,w_{k-1}\}$ and $\{w'_1,\ldots,w'_{k-1},w\}$. 
So, the number of subgraphs isomorphic to $K_{k-1,k}$ in $G_{I^*}$  is equal to that of $G_I$.
We will reach a contradiction by showing that the number of vertices $v\in J^*_1$ such that $Y^*(v)\neq\emptyset$ in $G_{I^*}$ is greater than that in $G_I$. 

For each $i\in [k-1]$, let $w''_i$ be the neighbor of $w'_i$ not in $\{w_1,\ldots,w_{k-1}\}$.
If $w''_i\not\in J_1$ for each $i\in [k-1]$, then $(I\setminus\{w'_1,\ldots,w'_{k-1},w',v'_1,\ldots,v'_{k-1}\}) \cup \{w_1,\ldots,w_{k-1},v,v_1,\ldots,v_{k-2}\}$ is a smaller independent dominating set of $G$ than $I$, which is a contradiction.
Thus, we may assume that $w''_1\in J_1$ (and therefore $w''_1\in J^*_1)$ and $N_I(w''_1)=\{w'_1\}$ (and also $N_{I^*}(w''_1)=\{w'_1\}$). 
Clearly, $Y(w''_1)=\emptyset$, but $Y^*(w''_1)=\{w_1,\ldots,w_{k-1}\}\neq \emptyset$.
Moreover, note that $w'\in J^*_1$ and $Y^*(w')=\{w_1,\ldots,w_{k-1}\}$.  
This is a contradiction to condition (2) of the choice of $I$, which completes the proof.
\end{proof}

Now, suppose to the contrary that  $|I|=i(G)> \frac{k-1}{2k-1}|V(G)|$, which implies $(2k-1)|I|-(k-1)|V(G)|>0$. 
For each vertex $v$, define the {\it initial charge} $\mu(v)$ of each vertex $v$ to be  \[ \mu(v)=\begin{cases} 
k & \text{if }v\in I\\
1-k & \text{if }v\in J.
\end{cases}\]
The sum of the initial charge is $k|I|+(1-k)(|V(G)|-|I|)=(2k-1)|I|-(k-1)|V(G)| >0$.

We distribute the initial charge according to the following {\it discharging rules}, which are designed so that the total charge is preserved, to obtain the {\it final charge} $\mu^*(v)$ at each vertex $v$.
We obtain a contradiction by showing that the sum of the final charge is non-positive, by proving that the final charge of each vertex is non-positive.

\begin{enumerate}[{\bf[R1]}]
\item\label{r1} 
Every vertex in $J$ sends $-1$ to each $I$-neighbor.
\item\label{r2} 
For $i \in \{2,\ldots,k-2\}$, every vertex $w\in J_i$ with $Y(w)\neq \emptyset$ sends $-\frac{k-1-i}{|Y(w)|}$ to each vertex in $Y(w)$.
\item\label{r3} For every vertex $w\in J_{1}$,
\begin{itemize}
    \item[{\bf[R3-1]}] if $|Y(w)|=k-1$, 
then $w$ sends $-\frac{k-2}{k-1}$ to each vertex in $Y(w)$.
\item[{\bf[R3-2]}] if $|Y(w)|\le k-2$, 
then $w$ sends $-1$ to each vertex in $Y(w)$.
\item[{\bf[R3-3]}] if $|Y(w)|\le k-3$ and $\displaystyle\bigcup_{u\in J_{k-1}\cap N_G(w)}Y(u)=\{x\}$, then $w$ sends $-1$ to the vertex $x$.
\end{itemize}
\end{enumerate}

\begin{figure}[h!]
\centering
\includegraphics[width=18cm, page = 7]{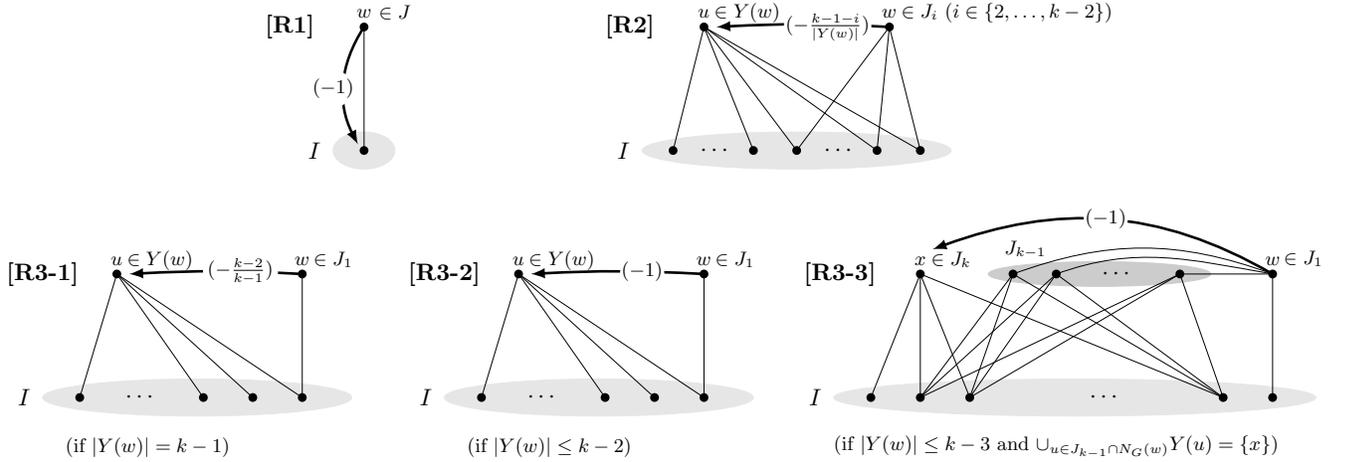}
\caption{Illustrations for the discharging rules}\label{fig:rules}
\end{figure}

\begin{claim}\label{claim:charge-k}
For every vertex $v$, the final charge $\mu^*(v)$ is non-positive. 
\end{claim}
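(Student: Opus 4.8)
The plan is to check $\mu^*(v)\le 0$ separately for the different kinds of vertices, exploiting the fact that the discharging rules have been tailored so that the charge of a vertex telescopes to a non-positive value. If $v\in I$, then all $k$ neighbours of $v$ lie in $J$ (as $I$ is independent) and by [R1] each sends $-1$ to $v$; no rule sends charge into $I$, so $\mu^*(v)=k-k=0$. If $v\in J_i$ with $1\le i\le k-1$, then $v$ lies outside $J_k$ and hence never receives charge: it only sends. It sends $-i$ in total by [R1]; if $2\le i\le k-2$ then [R2] sends a further $-(k-1-i)$ when $Y(v)\ne\emptyset$ and nothing otherwise, if $i=k-1$ no further rule applies, and if $i=1$ the total sent through [R3-1]/[R3-2]/[R3-3] is controlled using $|Y(v)|\le k-1$ (Claim~\ref{claim-basic-k}(i)). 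In each case $\mu^*(v)=(1-k)+(\text{total sent out})\le 0$ by a short arithmetic check.

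The substantial case is $v\in J_k$. Since all $k$ neighbours of $v$ lie in $J$, rule [R1] brings the charge of $v$ to $(1-k)+k=1$, so it suffices to show $v$ receives a total of at most $-1$ through [R2] and [R3]. The vertices that may send to $v$ are the members of $X(v)\cap J_{[1,k-2]}$, together with those $u\in J_1$ with $|Y(u)|\le k-3$ and $\bigcup_{w\in J_{k-1}\cap N_G(u)}Y(w)=\{v\}$ (via [R3-3]); a vertex of the latter type need not lie in $X(v)$, and since every contribution is negative, any overlap only helps. Two elementary bounds drive the argument. First, each $w\in X(v)\cap J_{[1,k-2]}$ sends $v$ a charge of at most $-\tfrac1{k-1}$: if $w\in J_1$ it sends $-\tfrac{k-2}{k-1}$ or $-1$, and if $w\in J_i$ with $2\le i\le k-2$ it sends $-\tfrac{k-1-i}{|Y(w)|}\le-\tfrac1{k-1}$ by Claim~\ref{claim-basic-k}(i). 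Second, any one of the following forces a contribution of at most $-1$ to $v$, and hence finishes the proof: some $w\in X(v)\cap J_1$ has $|Y(w)|\le k-2$ (then [R3-2] sends $-1$ to $v$); some $w\in X(v)\cap J_i$ with $2\le i\le k-2$ has $|Y(w)|\le k-1-i$ (then [R2] sends at most $-1$ to $v$); $v$ receives an [R3-3] contribution; or $X(v)\cap J_1$ has two vertices (both necessarily with $Y$-set of size $k-1$), which together send $-\tfrac{2(k-2)}{k-1}\le-1$ to $v$.

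Using these, I would argue as follows. If $|X(v)\cap J_{[1,k-2]}|\ge k-1$, summing the bound $-\tfrac1{k-1}$ over these vertices gives total at most $-1$. Otherwise $|X(v)\cap J_{[1,k-2]}|\le k-2$, and $|X(v)|\ge k-1$ by Claim~\ref{claim-basic-k}(ii), so $X(v)$ meets $J_{k-1}\cup J_k$. After discarding the situations already settled by a single discharging event, one is left with a small number of configurations. The main one is $X(v)\cap J_{[2,k-2]}=\emptyset$ with $X(v)\cap J_1$ either empty or a single vertex $w$ with $|Y(w)|=k-1$: this is exactly the hypothesis of Claim~\ref{claim-basic2-k}, which yields a vertex $u\in J_1$ triggering [R3-3] and sending $-1$ to $v$ (and in the singleton subcase the extra $-\tfrac{k-2}{k-1}$ from [R3-1] is added on). In the configurations with $X(v)\cap J_{[2,k-2]}\ne\emptyset$, an edge count between $X(v)$ and $N_G(v)$ together with Claim~\ref{claim-basic-k}(ii) shows that $X(v)$ contains $k-1$ vertices of $J_{k-1}$ sharing a common $I$-neighbourhood, so Claim~\ref{claim-temp-k} again produces a $J_1$-vertex triggering [R3-3]. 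This completes the estimate $\sigma(v)\le-1$ in every case.

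The main obstacle is precisely this last reduction: verifying that, once the configurations handled by a single discharging event have been removed, $X(v)$ has the rigid shape demanded by Claims~\ref{claim-temp-k} and~\ref{claim-basic2-k} — namely $k-1$ vertices of $J_{k-1}$ with a common $I$-neighbourhood, plus at most one distinguished $J_1$-vertex. Getting the bookkeeping right at the borderline counts (for instance $|X(v)\cap J_1|\in\{k-3,k-2\}$, or actually guaranteeing the $k-1$ common-neighbourhood vertices of $J_{k-1}$ exist) is where the care lies; the rest is just adding up the charges dictated by [R1]--[R3].
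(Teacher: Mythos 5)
Your handling of $v\in I$, of $v\in J_{[1,k-1]}$, and of the configurations with $X(v)\cap J_{[2,k-2]}=\emptyset$ (via Claims~\ref{claim-temp-k} and~\ref{claim-basic2-k} and rule [R3-3]) matches the paper, and your auxiliary bounds (each $w\in X(v)\cap J_{[1,k-2]}$ sends $v$ at most $-\tfrac{1}{k-1}$; the list of single-event wins) are correct. The genuine gap is in the remaining case $X(v)\cap J_1=\emptyset$ and $X(v)\cap J_{[2,k-2]}\neq\emptyset$. There you assert that, once the single-event wins are excluded, an edge count plus Claim~\ref{claim-basic-k}(ii) forces $X(v)$ to contain $k-1$ vertices of $J_{k-1}$ with a common $I$-neighbourhood, so that Claim~\ref{claim-temp-k} yields an [R3-3] transfer of $-1$. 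Nothing you cite supports this: applying Claim~\ref{claim-basic-k}(ii) to a vertex of $X(v)\cap J_{i+1}$ (with $i$ maximal such that $X(v)\cap J_{[1,i]}=\emptyset$) only produces $i+1$ vertices of $J_{i+1}$ in $X(v)$ with identical $I$-neighbourhoods, and $X(v)$ may contain no vertex of $J_{k-1}$ at all. For example, with $k=4$, the configuration $N_G(v)=\{x_1,\dots,x_4\}$, $X(v)=\{a_1,a_2,b\}$ with $N_I(a_1)=N_I(a_2)=\{x_1,x_2\}$ and $N_I(b)=N_G(v)$ is compatible with everything you invoke; here $Y(a_1)=Y(a_2)=\{v,b\}$, so $|Y(a_j)|=2>k-1-2$ and none of your wins fire, there is no $J_3$-vertex in $X(v)$, no [R3-3] transfer is guaranteed, and your stated bounds only give $v$ a total of $2\cdot\bigl(-\tfrac13\bigr)$, short of $-1$.

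The missing ingredient is the paper's case (2)-2 computation. With $i$ maximal as above and $v_1,\dots,v_{i+1}\in X(v)\cap J_{i+1}$ sharing their $I$-neighbourhood, $k$-regularity gives the sharper bound $|Y(v_j)|\le k-i-1$ (fix $x\in N_I(v_j)$: its $k$ neighbours include $v_1,\dots,v_{i+1}$ and every member of $Y(v_j)$, and $Y(v_j)\subseteq J_k$ is disjoint from $\{v_1,\dots,v_{i+1}\}$), so by [R2] these vertices together send $v$ at most $-(i+1)\tfrac{k-i-2}{k-i-1}\le -1$ for $1\le i\le k-3$ and $k\ge4$. This aggregated estimate lies strictly between your per-vertex win (which would need $|Y(v_j)|\le k-i-2$) and your crude count (which would need $k-1$ senders), and without it the case $X(v)\cap J_{[2,k-2]}\neq\emptyset$ is not closed; in the same leftover analysis you should also record the easy mixed case where $X(v)\cap J_1=\{w\}$ with $|Y(w)|=k-1$ coexists with some $w'\in X(v)\cap J_{[2,k-2]}$, where $-\tfrac{k-2}{k-1}-\tfrac{1}{k-1}=-1$ suffices, as in the paper's case (2)-1.
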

\begin{proof}
If $v\in I$, then $v$ has exactly $k$ $J$-neighbors since $G$ is $k$-regular, so the final change of $v$ is zero by {\bf[R1]}.
If $v\in J_i$ for $i\in\{2, \ldots, k-1\}$, then $\mu^*(v)\leq 1-k-(-1)\cdot i-\min\{0,-k+1+i\}=0$ by  {\bf[R1]} and {\bf[R2]}. 
If $v\in J_1$, then $\mu^*(v)\leq 1-k-(-1)-(2-k)=0$ by {\bf[R1]} and {\bf[R3]}. 

Now it remains to check the final charge of a vertex $v$ in $J_k$. 
Note that $v$ always sends $(-1)\cdot k$ to its neighbors, which are all $I$-neighbors, by {\bf[R1]}.
So in order for the final charge of $v$ to be non-positive, it must receive charge at most $-1$ from other vertices. 

\begin{enumerate}[(1)]
\item Suppose $X(v) \cap J_{[1,k-2]} = \emptyset$.

By Claim~\ref{claim-basic2-k}, $X(v)$ consists of $k-1$ distinct vertices in $J_{k-1}$ with 
a common $J_1$-neighbor $u$ such that  $|Y(u)|\le k-3$ and $\cup_{w\in J_{k-1}\cap N_G(u)}Y(w)=\{v\}$. 
Thus $u$ sends $-1$ to $v$ by {\bf[R3-3]}.

\item Suppose $X(v) \cap J_{[1,k-2]} \neq \emptyset$. 

\begin{enumerate}[{(2)-}1:]
    \item  $X(v)$ contains a vertex $v_1\in J_1$.
    
If $|Y(v_1)| \le k-2$, then $v_1$ sends  $-1$ to $v$ by {\bf[R3-2]}, 
so suppose $|Y(v_1)| = k-1$.
If there is a vertex $v_2 \in (X(v)\setminus\{v_1\}) \cap J_{[1,k-2]}$, then $v_2$ sends  at most $-\frac{1}{k-1}$ to $v$ and $v_1$ sends  $-\frac{k-2}{k-1}$ to $v$ by {\bf[R2]} and {\bf[R3]}, so $v$ receives at most $-1$. 
If $(X(v) \setminus \{v_1\}) \cap J_{[1,k-2]}= \emptyset$, then by Claim~\ref{claim-basic2-k}, $X(v)\setminus J_1$ consists of $k-1$ distinct vertices in $J_{k-1}$ with a common $J_1$-neighbor $u$ such that $|Y(u)|\le k-3$ and $\cup_{w\in J_{k-1}\cap N_G(u)}Y(w)=\{v\}$.
Thus $u$ sends $-1$ to $v$ by {\bf[R3-3]}. 

\item $X(v)\cap J_1=\emptyset$. 

Let $i$ be the maximum integer such that $X(v) \cap J_{[1,i]} = \emptyset$. 
Since $X(v) \cap J_{[1,k-2]} \neq \emptyset$, there is a vertex $v_1 \in X(v) \cap J_{i+1}$ where $1\le i\le k-3$.
Moreover, by Claim~\ref{claim-basic-k}(ii),
there are $i+1$ vertices $v_1,\ldots,v_{i+1}\in X(v) \cap J_{i+1}$ such that $N_I(v_1) = \cdots = N_I(v_{i+1})$ since $X(v) \cap J_{[1,i]} = \emptyset$.
Since $G$ is $k$-regular, 
$|Y(v_j)| \le k-i-1$ for all $j \in [i+1]$, so $v$ receives at most
$ \left(-\frac{k-1-(i+1)}{k-i-1}\right)\cdot (i+1) = \frac{i(i+3-k)-k+2}{k-i-1} \le  -1$ from $v_1, \ldots, v_{i+1}$ by {\bf[R2]}.
Note that last inequality holds since $1\le i\le k-3$. 
\end{enumerate}
\end{enumerate}
\end{proof}

\section{Ratio of independent domination and domination for regular graphs}\label{sec:ratio}

In this section, we prove  Theorem~\ref{mainthm:kreg}.
Fix $k \ge 4$, and let $G$ be a connected $k$-regular graph on $n$ vertices that is not  $K_{k,k}$.
For simplicity, let $n_1(H)=|V(H)|-n_0(H)$ for every graph $H$.
We prove that the following statement holds.

\begin{claim}\label{clm:igd}
For a dominating set $D$ of $G$, 
$i(G) \le |D|+(k-3) \cdot n_1(G[D])$.
\end{claim}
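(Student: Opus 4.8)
The plan is to construct an explicit independent dominating set of $G$ and estimate its size. Write $D_0$ for the set of vertices isolated in $G[D]$ and $D_1=D\setminus D_0$, so that $|D_1|=n_1(G[D])$; note that $D_0$ has no neighbour in $D$ (hence is independent), and that $G[D_1]$ has no isolated vertex, since every vertex of $D_1$ has a $D$-neighbour, which must lie in $D_1$. Let $A$ be a maximum independent set of $G[D_1]$, put $U=V(G)\setminus N_G[D_0\cup A]$, let $B$ be a maximal independent set of $G[U]$, and take $I=D_0\cup A\cup B$. First one checks $I$ is an independent dominating set: the three parts are pairwise disjoint, each is independent, there are no edges from $D_0$ to $A$ (as $A\subseteq D$) nor from $B$ to $D_0\cup A$ (as $B\subseteq U$), and $I$ dominates $V(G)$ because $N_G[D_0\cup A]=V(G)\setminus U$ while $B$, a maximal independent set of $G[U]$, dominates $U$. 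Thus $i(G)\le |I|=|D_0|+|A|+|B|\le |D_0|+\alpha(G[D_1])+|U|$.

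The crux is to bound $|U|$. Any $x\in U$ is not in $D$: it is not in $D_0$, and if $x\in D_1$ then by maximality of $A$ we would have $x\in A\cup N_G(A)$, contradicting $x\notin N_G[D_0\cup A]$; so $x$ has a $D$-neighbour, which lies in $D_1\setminus A$ (it cannot lie in $D_0\cup A$). Writing $d=|D_1|$, $a=\alpha(G[D_1])$ and $m=|E(G[D_1])|$, this yields $|U|\le \sum_{v\in D_1\setminus A}|N_G(v)\cap U|$; since $G$ is $k$-regular and $U\cap D_1=\emptyset$, each summand is at most $k-\deg_{G[D_1]}(v)$, and since $A$ is independent every edge of $G[D_1]$ has an endpoint in $D_1\setminus A$, so $\sum_{v\in D_1\setminus A}\deg_{G[D_1]}(v)\ge m$. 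Hence $|U|\le k(d-a)-m$, and therefore $i(G)\le |D_0|+kd-(k-1)a-m$. Since $|D|=|D_0|+d$ and $n_1(G[D])=d$, it remains to prove $(k-1)a+m\ge 2d$ in order to conclude $i(G)\le |D_0|+(k-2)d=|D|+(k-3)n_1(G[D])$.

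Because $k\ge 4$, the inequality $(k-1)a+m\ge 3a+m$ holds, so it suffices to establish the degree-free bound $3\alpha(H)+|E(H)|\ge 2|V(H)|$ for every graph $H$, applied to $H=G[D_1]$. For this, take a maximum independent set $A$ of $H$ and $B=V(H)\setminus A$; by maximality each vertex of $B$ has a neighbour in $A$, so $|E(H)|\ge |B|+|E(H[B])|$, while $|E(H[B])|\ge |B|-\alpha(H[B])\ge |B|-\alpha(H)$ using the elementary fact $\alpha(H')\ge |V(H')|-|E(H')|$ (delete one endpoint of an edge repeatedly) together with $\alpha(H[B])\le \alpha(H)$; adding these and substituting $|B|=|V(H)|-\alpha(H)$ gives the claim.

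The main obstacle is precisely this last inequality. The naive estimates $|A|\le d$ and $|U|\le (k-1)d$ overshoot the target by an additive $\Theta(d)$, and closing that gap forces one to track the independence number and the edge count of $G[D_1]$ \emph{simultaneously} and to combine them via $3\alpha(H)+|E(H)|\ge 2|V(H)|$; this bound is sharp exactly when $G[D_1]$ is a disjoint union of triangles and $k=4$, which is consistent with the tightness of the overall argument at $k=4$.
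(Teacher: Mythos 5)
Your proof is correct, and it takes a genuinely different route from the paper. The paper argues by induction on $n_1(G[D])$: it picks a vertex $v$ of maximum degree $d$ in $G[D]$, swaps $v$ for a maximal independent set $P'$ of its private neighborhood outside $D$ (so $|P'|\le k-d$ and the new vertices are isolated in the new induced subgraph), and checks that the quantity $|D|+(k-3)\,n_1(G[D])$ does not increase; the case $d=1$ needs the extra observation that a pendent edge of $G[D]$ kills two non-isolated vertices at once. You instead build the independent dominating set in one shot as $D_0\cup A\cup B$ and close the gap with a global count, reducing everything to the inequality $3\alpha(H)+|E(H)|\ge 2|V(H)|$, which you prove correctly via $\alpha(H')\ge |V(H')|-|E(H')|$. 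I verified the key steps: $I$ is indeed independent and dominating, $U\cap D=\emptyset$, every vertex of $U$ has a neighbour in $D_1\setminus A$, the edge count $\sum_{v\in D_1\setminus A}\deg_{G[D_1]}(v)\ge |E(G[D_1])|$ holds since $A$ is independent, and the final arithmetic $2|D_1|\le (k-1)\alpha(G[D_1])+|E(G[D_1])|$ follows from $k\ge 4$ and your lemma. Your approach actually yields the sharper intermediate bound $i(G)\le |D_0|+k|D_1|-(k-1)\alpha(G[D_1])-|E(G[D_1])|$, which carries more structural information about $G[D]$; the paper's induction is shorter and more modular, and localizes the use of $k\ge 4$ to a single line. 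One cosmetic remark: your observation that $G[D_1]$ has no isolated vertices is true but never needed, since the lemma $3\alpha(H)+|E(H)|\ge 2|V(H)|$ holds for all graphs.
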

\begin{proof}
We use induction on $n_1(G[D])$.
 If $n_1(G[D])=0$, then $D$ is an independent dominating set of $G$, so the statement holds since $i(G) \le |D|$.

Now, assume  $n_1(G[D])>0$.
 Take a vertex $v \in D$ with maximum degree in $G[D]$, and let $\deg_{G[D]}(v)=d$.
 Note that $d\geq 1$, and let $P = \{u \in V(G) \setminus D \mid N_G[u] \cap D = \{v\}\}$.
 Take a maximal independent set $P'$ of $G[P]$, and let $D' = (D \setminus \{v\}) \cup P'$. Note that $D'$ is a dominating set of $G$ by the maximality of $P'$.
Since $G$ is $k$-regular, $|P'|\le k-d$, so $|D'| = |D| + |P'| - 1 \le |D| + k-d-1$.
Note that all vertices in $P'$ are isolated vertices in $G[D']$, and 
therefore $n_1(G[D']) \le n_1(G[D])-1-q$, where $q$ is the number of pendent neighbors of $v$ in $G[D]$. 
If $d=1$, then the neighbor of $v$ in $G[D]$ is also a pendent vertex in $G[D]$, so  $q= 1$. Hence, since $k\ge 4$, 
it holds that $-d+2-q(k-3)\le -d+2-q \le 0$.
By the induction hypothesis, \begin{eqnarray*}
i(G) \le  |D'| + (k-3) \cdot n_1(G[D']) &\le& (|D|+k-d-1) + (k-3)(n_1(G[D])-1-q) \\
&=& |D| + (k-3) \cdot n_1(G[D]) -d+2-q(k-3) \\
&\le& |D| + (k-3) \cdot n_1(G[D]).
\end{eqnarray*} 
\end{proof}

Let $D$ be a minimum dominating set of $G$, so $|D| = \gamma(G)$.
Let $c_k = \frac{k^2-4k+2}{k^2-2k}$. 
We have two cases.

{\bf Case 1}:
Suppose that $n_0(G[D]) \ge c_k \cdot n_1(G[D])$. 
Then \[\gamma(G)=|D|=n_0(G[D])+n_1(G[D]) \ge (c_k+1) \cdot n_1(G[D]), \mbox{ so }
\frac{ n_1(G[D])}{\gamma(G)} \le \frac{1}{c_k+1}.\]
By Claim~\ref{clm:igd}, $i(G) \le |D|+(k-3) \cdot n_1(G[D])$, and therefore,
\begin{eqnarray*}
\frac{i(G)}{\gamma(G)} 
\le \frac{\gamma(G) + (k-3)\cdot n_1(G[D])}{\gamma(G)}
=1+\frac{(k-3) \cdot n_1(G[D])}{\gamma(G)}  \le 1+ \frac{k-3}{c_k+1}
=\frac{k^3-3k^2+2}{2k^2-6k+2}.
\end{eqnarray*}

{\bf Case 2}:
Suppose that $n_0(G[D]) \le c_k \cdot n_1(G[D])$. 
Note that $n \le (k+1)\cdot n_0(G[D]) + k \cdot n_1(G[D])$ since $D$ is a dominating set, and each vertex in $n_0(G[D])$ (resp. $n_1(G[D])$) is adjacent to at most $k$ (resp. $k-1)$ vertices not in $D$. 
Thus
\[  \gamma(G)= n_0(G[D])+n_1(G[D]) \ge \frac{n-k \cdot n_1(G[D])}{k+1}+n_1(G[D])=\frac{n+n_1(G[D]) }{k+1} .\]
Since $n_0(G[D]) \le c_k \cdot n_1(G[D])$, we obtain $n \le ((k+1)c_k +k) \cdot n_1(G[D])$, and therefore,
\[  \gamma(G)\ge \frac{n}{k+1}+\frac{n}{(k+1)((k+1)c_k+k)} 
=\frac{n(2k^2-6k+2)}{(2k-1)(k^2-2k-2)}. \]
By Theorem~\ref{mainthm:kreg}, $i(G) \le \frac{n(k-1)}{2k-1}$, and therefore,
\begin{eqnarray*}
\frac{i(G)}{\gamma(G)} 
&\le&  
\frac{k-1}{2k-1} \cdot \frac{(2k-1)(k^2-2k-2)}{2k^2-6k+2}
=\frac{k^3-3k^2+2}{2k^2-6k+2}.
\end{eqnarray*}

\section{Independent domination for graphs with maximum degree $4$}\label{sec:idset4}

In this section, we prove  Theorem~\ref{thm:idset4}.
We actually prove the following slightly stronger statement, whose direct consequence is  Theorem~\ref{thm:idset4}.
 
\begin{theorem}\label{thm:weight}
If $G $ is a graph with maximum degree at most $4$, then $9i(G) \le 5|V(G)|+4n_0(G)$.
\end{theorem}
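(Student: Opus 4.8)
The plan is to prove Theorem~\ref{thm:weight} by induction on $|V(G)|$ (equivalently, by taking a counterexample with the fewest vertices); this is precisely why the $n_0$ term is carried along — it lets the induction survive when a reduction creates new isolated vertices. If $G$ has no vertices the inequality is trivial, and if $G$ has an isolated vertex $u$, then $i(G)=i(G-u)+1$, $|V(G)|=|V(G-u)|+1$, and $n_0(G)=n_0(G-u)+1$, so $9i(G)-5|V(G)|-4n_0(G)=9i(G-u)-5|V(G-u)|-4n_0(G-u)$ and the inequality is inherited from $G-u$. Hence we may assume $G$ is nonempty and isolate-free, so $n_0(G)=0$ and $\delta(G)\ge 1$, and it remains to prove $9i(G)\le 5|V(G)|$.

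The engine is a deletion step. Suppose $A\subseteq V(G)$ is independent and nonempty. For a minimum independent dominating set $S$ of $G-N[A]$, the set $A\cup S$ is independent (no vertex of $S\subseteq V(G)\setminus N[A]$ is adjacent to $A$) and dominates $G$, so $i(G)\le |A|+i(G-N[A])$. Writing $n=|V(G)|$ and applying the induction hypothesis to $G-N[A]$,
\[
9i(G)\le 9|A|+5\bigl(n-|N[A]|\bigr)+4\,n_0\bigl(G-N[A]\bigr)=5n+\Bigl(9|A|-5|N[A]|+4\,n_0(G-N[A])\Bigr).
\]
Thus it suffices to produce an independent set $A$ with $9|A|+4\,n_0(G-N[A])\le 5|N[A]|$. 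For $A=\{v\}$ this says $4\,n_0(G-N[v])\le 5\deg_G(v)-4$, i.e. $G-N[v]$ acquires at most $0,1,2,4$ isolated vertices according as $\deg_G(v)=1,2,3,4$; for $A$ a maximal independent set of a whole component $C$ it says $9|A|\le 5|V(C)|$ (and we then recurse on the other components).

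To produce such an $A$ we examine a vertex $v$ of minimum degree. If $\delta(G)=4$ then $G$ is $4$-regular, and a new isolated vertex of $G-N[v]$ is a vertex $u$ with $N_G(u)=N_G(v)$; each of the four vertices of $N_G(v)$ is adjacent to $v$ and to every such $u$, forcing at most three of them, so $A=\{v\}$ works. If $\delta(G)=3$, a new isolated vertex of $G-N[v]$ again has neighborhood exactly $N_G(v)$, giving at most three "twins"; if some degree-$3$ vertex has at most two twins we use it, and otherwise the component of $v$ is forced to be $K_{3,4}$ (the three common neighbors have degree $4$, the four twins have degree $3$), for which the side of size $3$ gives $9\cdot3\le5\cdot7$. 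The cases $\delta(G)\in\{1,2\}$ carry the real content: a new isolated vertex of $G-N[v]$ may now have a proper subneighborhood of $N_G(v)$, so $G-N[v]$ can pick up many isolated vertices — think of a support vertex with several pendants, or a degree-$2$ vertex inside a small dense $K_{2,t}$-type or "book" gadget. The fix is to re-root the deletion: instead of $v$, take $A=\{w\}$ for a suitable higher-degree (second-)neighbor $w$, such as a support vertex carrying many pendants, for which $|N[w]|$ is large enough to swallow the slack; or take $A$ a carefully chosen pair of non-adjacent vertices; or dominate an entire small component (a $K_2$, a star, a book, $K_{3,4}$, etc.). One packages these into a finite list of reducible configurations and checks that at least one always applies, with $H(3,2)$ (where $i=5$ and $9i=45=5\cdot9$) as the tight case that identifies which local pictures are extremal and therefore demand the most delicate routing.

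The main obstacle is exactly this case analysis for $\delta(G)\le 2$: a single low-degree vertex is the wrong thing to delete when it hangs off degree-$4$ vertices carrying several private or shared neighbors, so one must verify, in each such local configuration, that some nearby vertex, pair, or small component yields a set $A$ with $9|A|+4\,n_0(G-N[A])\le 5|N[A]|$. The authors' own comment that the analogous argument for $D\in\{5,6,7,8\}$ is "tedious case checking" is fair warning that even $D=4$ requires a careful (if elementary) enumeration. Modulo that enumeration the induction closes, giving $9i(G)\le 5|V(G)|+4n_0(G)$, and Theorem~\ref{thm:idset4} follows by restricting to isolate-free $G$.
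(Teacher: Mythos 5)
Your framework is exactly the paper's: take a minimum counterexample, reduce to the connected isolate-free case, delete a closed neighborhood $N_G[A]$, and observe that the reduction succeeds unless the number of vertices isolated by the deletion is large; your inequality $9|A|+4n_0(G-N_G[A])\le 5|N_G[A]|$ with $A=\{v\}$ is precisely the paper's criterion $4|I_G(v)|\le 5\deg_G(v)-4$, and your treatment of $\delta(G)\in\{3,4\}$ by edge-counting matches the paper's. However, there is a genuine gap: for $\delta(G)\le 2$ — which you yourself identify as carrying "the real content" — you do not give a proof. You list candidate moves (re-root at a support vertex, delete $N_G[A]$ for a pair $A$, handle small components) and assert that "one packages these into a finite list of reducible configurations and checks that at least one always applies," but you never produce the list nor verify that it is exhaustive. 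As written, the argument does not close; a referee could not check it.

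For comparison, the paper does not need a long enumeration here. It plays the lower bound $4|I_G(v)|\ge 5\deg_G(v)-3$ (your failed reduction, valid at \emph{every} vertex of a minimal counterexample) against an upper bound on $|I_G(v)|$ obtained by counting the at most $3\deg_G(v)$ edges leaving $N_G[v]$, each isolated vertex of $G-N_G[v]$ absorbing at least $\delta(G)$ of them. For $\delta(G)=2$ this forces a $2$-vertex $v$ to have two "twins," whence a neighbor $x$ of $v$ has degree at least $3$ but $I_G(x)=\emptyset$ (every potential isolated vertex of $G-N_G[x]$ would need two neighbors inside $N_G(x)$, yet $v$ and its twins have no neighbors available for it), contradicting the lower bound at $x$. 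For $\delta(G)=1$ a refined edge count (splitting $\deg_G(v)=d_1+d_2$ into pendant and non-pendant neighbors) shows every support vertex is a $4$-vertex with exactly two pendants, and then $|I_G(v)|\ge 5$ together with $|I_G(v)|\le 5$ forces a single $10$-vertex graph that is checked directly. So the missing cases are not "tedious but routine"; they require these specific second-neighborhood arguments, and supplying them is the actual content of the proof. Until you do, your proposal is a correct strategy with the decisive step unexecuted.
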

  
\begin{proof}
Suppose to the contrary that a graph $G$ is a minimum counterexample to Theorem~\ref{thm:weight} with respect to the number of vertices.
In particular, $9i(G)> 5|V(G)|+4n_{0}(G)$ and $9i(H) \le 5|V(H)|+4n_{0}(H)$ for every proper subgraph $H$ of $G$. 

Note that $|V(G)|\ge 2$ since the theorem holds for the graph with a single vertex. 
If $G$ is the disjoint union of two graphs $G_1$ and $G_2$, then  by the minimality of $G$, we obtain
\[ 9i(G) = 9i(G_1)+9i(G_2) \le (5|V(G_1)|+4n_0(G_1))+(5|V(G_2)|+4n_0(G_2)) = 5|V(G)|+4n_0(G), \]
which is a contradiction. 
Thus $G$ is connected, so $n_0(G)=0$, therefore, $5|V(G)|< 9i(G)$.
 
For simplicity, denote the set of isolated vertices of $G-N_G[v]$ by $I_G(v)$. 
By counting the number of edges between $N_G[v]$ and $G-N_G[v]$, we know $|I_G(v)|\le \frac{3\deg_G(v)}{\delta(G)}$, since $G$ is a connected graph with maximum degree at most $4$.  
Adding $v$ to an  independent dominating set of $G-N_G[v]$ is an independent dominating set of $G$.
Thus, by the minimality of $G$,
\begin{eqnarray*}
5|V(G)|< 9i(G)\le  9  + 9i(G-N_G[v]) &\le& 9  + 5|V(G-N_G[v])|+4n_0(G-N_G[v])\\
&=&9+5|V(G)|-5(\deg_G(v)+1)+4|I_G(v)|, \end{eqnarray*}
so 
$4|I_G(v)| \ge 5\deg_G(v)-3$ since each term is an integer. 
Hence, it holds that 
\begin{eqnarray}\label{claim:basic:I}
\forall v\in V(G), \quad  \quad 
4\cdot \frac{3\deg_G(v)}{\delta(G)} \ge 4|I_G(v)| \ge 5\deg_G(v)-3.
\end{eqnarray}
If $\delta(G)\ge 3$,
then \eqref{claim:basic:I} implies that $\deg_G(v)=3$ for every vertex $v$ so $G$ is $3$-regular.
Now, \eqref{claim:basic:I} again implies that $|I_G(v)|=3$ for every vertex $v$, which is impossible in a $3$-regular graph. 
If $\delta(G)=2$, then by considering a $2$-vertex $v$,  \eqref{claim:basic:I} implies that $|I_G(v)|\ge 2$.
This further implies that $v$ has a neighbor $x$ such that $\deg_G(x)\ge 3$ and $|I_G(x)|\le 1$, which is a contradiction to \eqref{claim:basic:I}. 
Hence, $\delta(G)=1$. 

\begin{claim} \label{claim:delta2} 
If  a vertex $v$ has a pendent neighbor, then $v$ is a $4$-vertex with exactly two pendent neighbors.
\end{claim}
\begin{proof}
Let $v$ be a vertex with the maximum number of pendent neighbors such that $\deg_G(v)=d_1+d_2$ where $d_1$ denotes the number of pendent neighbors of $v$.
Since each neighbor of $v$ has at most $d_1$ pendent neighbors, by counting the number of edges between $N_G[v]$ and $G-N_G[v]$, we know $|I_G(v)|\le d_1d_2+ \frac{ 3d_2-d_1d_2}{2}$. 
By \eqref{claim:basic:I}, \begin{eqnarray}\label{eq:d1d2}
&& 4d_1d_2+ 2 ({ 3d_2-d_1d_2}) \ge 4|I_G(v)| \ge 5d_1+5d_2-3.
\end{eqnarray}
Since $d_1+d_2\le 4$, 
this implies  $d_1\le 2$. 
Thus, $v$ has at most two pendent neighbors.

Note that by \eqref{claim:basic:I}, for a pendent vertex $w$, $|I_G(w)|\ge 1$, which implies that every vertex with a pendent neighbor has at least two pendent neighbors.
Hence, $v$ has exactly two pendent neighbors. 

Moreover, $d_1=2$ in \eqref{eq:d1d2} results in $d_2\geq 2$, so we conclude that $v$ must be a $4$-vertex. 
\end{proof}

Consider a vertex $v$ with a pendent neighbor. 
By Claim~\ref{claim:delta2},  $v$ is a $4$-vertex with exactly two pendent neighbors.
By \eqref{claim:basic:I}, $|I_G(v)|\ge 5$. 
Since every vertex has at most two pendent neighbors,  $|I_G(v)|\le 5$. This further implies that $G$ is the graph with 10 vertices obtained from a $4$-cycle $v_1v_2v_3v_4v_1$ by attaching exactly two pendent neighbors to each of $v_1$, $v_2$, and $v_3$. One may check easily that this is not a counterexample to our theorem. 
 \end{proof}

\section*{Acknowledgements}
Eun-Kyung Cho was supported by Basic Science Research Program through the National Research Foundation of Korea (NRF) funded by the Ministry of Education (NRF-2020R1I1A1A0105858711).
Ilkyoo Choi was supported by the Basic Science Research Program through the National Research Foundation of Korea (NRF) funded by the Ministry of Education (NRF-2018R1D1A1B07043049), and also by the Hankuk University of Foreign Studies Research Fund.
Boram Park was supported by Basic Science Research Program through the National Research Foundation of Korea (NRF) funded by the Ministry of Science, ICT and Future Planning (NRF-2018R1C1B6003577).

\bibliographystyle{abbrv}
\bibliography{ref}

\end{document}